\newcommand{\R}{\ensuremath{\mathbb{R}}}
\newcommand{\CC}{\mathcal{C}}
\newcommand{\CF}{\ensuremath{\mathcal{F}}}
\newcommand{\CO}{\ensuremath{\mathcal{O}}}
\newcommand{\CZ}{\ensuremath{\mathcal{Z}}}
\newcommand{\ov}{\overline}
\newcommand{\la}{\lambda}
\newcommand{\ga}{\gamma}
\newcommand{\G}{\Gamma}
\newcommand{\T}{\theta}
\newcommand{\f}{\varphi}
\newcommand{\al}{\alpha}
\newcommand{\s}{\ensuremath{\mathbb{S}}}
\newcommand{\U}{\ensuremath{\mathcal{U}}}
\newcommand{\de}{\delta}
\newcommand{\De}{\Delta}
\newcommand{\bxi}{\boldsymbol \xi}
\newcommand{\bx}{{\bf x}}
\newcommand{\by}{{\bf y}}
\newcommand{\bz}{{\bf z}}
\newcommand{\bu}{{\bf u}}
\newcommand{\bv}{{\bf v}}
\newcommand{\bw}{{\bf w}}
\newcommand{\bg}{{\bf g}}
\newcommand{\bF}{{\bf F}}
\def\p{\partial}
\def\e{\varepsilon}
\newtheorem {theorem} {Theorem} 
\newtheorem {proposition} [theorem] {Proposition}
\newtheorem {remark} {Remark}
\newtheorem {mtheorem} {Theorem}
\begin{document}

\title[Periodic solutions and invariant torus]
{Periodic solutions and invariant torus \\ in the R\"ossler System}

\author[M.R. C\^{a}ndido, C. Valls and D. D. Novaes]
{Murilo R. C\^andido$^1$, Douglas D. Novaes$^{1}$, and Claudia Valls$^2$ }

\address{$^1$ Departamento de Matem\'{a}tica, Universidade
Estadual de Campinas, Rua S\'{e}rgio Buarque de Holanda, 651, Cidade Universit\'{a}ria Zeferino Vaz, 13083-859, Campinas, SP,
Brazil}
\email{candidomr@ime.unicamp.br}
\email{ddnovaes@unicamp.br}

\address{$^2$ Departamento de Matem\'atica, Instituto Superior
T\'ecnico, Universidade de Lisboa, Av. Rovisco Pais 1049-001,
Lisboa, Portugal} \email{cvalls@math.ist.utl.pt}

\subjclass[2010]{34C23, 37G15, 34C45, 34C29}

\keywords{R\"ossler system, averaging theory, periodic solutions, invariant torus.}

\date{}
\dedicatory{}

\maketitle

\begin{abstract}
The R\"ossler System is characterized by a three-parameter family of quadratic 3D vector fields. There exist two one-parameter families of R\"ossler Systems exhibiting a zero-Hopf equilibrium. For R\"ossler Systems near to one of these families, we provide generic conditions ensuring the existence of a torus bifurcation. In this case, the torus surrounds a periodic solution that bifurcates from the zero-Hopf equilibrium. 
For R\"ossler Systems near to the other family, we provide generic conditions for the existence of a periodic solution bifurcating from the zero-Hopf equilibrium. This improves currently known results regarding periodic solutions for such a family. In addition, the stability properties of the periodic solutions and invariant torus are analysed.
\end{abstract}

\allowdisplaybreaks
\section{Introduction and statement of the main results}

The {\it R\"{o}ssler  System} was introduced in 1976 by R\"ossler \cite{Rossler76} as a prototype of a simple autonomous differential system behaving chaotically for some values of the parameters:
\begin{equation}\label{sr}
\begin{array}{l}
\dot{x}=-y-z,\vspace{0.2cm}\\
\dot{y}= x+ a y,\vspace{0.2cm}\\
\dot{z}= b x-c z + x z.
\end{array}
\end{equation}
By simple we mean low dimensional, few parameters, and only one non-linear term. Originally, this system was conceived for helping to understand the chaotic properties of some differential models of chemical reactions \cite{Rossler76b,Rossler77,Rossler78, Rossler79}. Since then, the chaotic behavior of the R\"ossler System has been addressed in several works. We may cite, for instance,  \cite{BBS,WSB,Z} and the references therein. 

Detecting periodic solutions in the R\"ossler System \eqref{sr} has also been a subject of interest  for many authors. A brief summary of these results can be found in \cite{Ll}, which we shall subsequently complement. In 1984, Glendinning and Sparrow \cite{GS} showed the existence of periodic solutions of the R\"ossler System near some homoclinic solutions. In 1995, Krishchenko \cite{Kr} proved that all periodic solutions of the R\"ossler System must lie in a specific bounded domain. In the same year, Magnitskii \cite{Ma} obtained asymptotic formulae for the amplitude and period of the periodic
solutions arising from Hopf bifurcations in the R\"ossler System.  In 1999, Terekhin and Panfilova \cite{TePa} provided sufficient conditions for the existence of periodic
solutions near the equilibria of the R\"ossler System. In 2000 and 2003, Pilarczyk \cite{pi1, Pi} used the Conley Index Theory to provide a computer-assisted proof that several periodic solutions exist in the R\"ossler System  for some parameter values. In 2006, Galias  \cite{ga06} developed a numerical method to study short-period solutions and applied it to the R\"ossler System. In 2007, Algaba et al. \cite{Al} studied the merging of the periodic solutions that appeared in resonances while also demonstrating the existence of two types of Takens-Bogdanov bifurcations of periodic solutions. In 2009, Wilczak and Zgliczy{\'n}ski \cite{WZ09} proved the existence of two period-doubling bifurcations connected by a branch of period two solutions for a specific range of the parameters of the R\"ossler System.

The {\it Averaging Theory} is a classical method and one of the main tools for detecting periodic solutions in regularly perturbed non-autonomous differential systems. Roughly speaking, this method provides a sequence of functions, $\bg_i$, each one called {\it $i$-th order averaged function}, for which their simple zeros correspond to isolated periodic solutions of the differential system. In 2007, Llibre at al. \cite{LlBu} used the first-order averaging method to study Hopf bifurcations in the R\"ossler System. More recently, in 2014,  Llibre \cite{Ll} used the first-order averaging method to study periodic solutions bifurcating from zero-Hopf equilibria of the R\"ossler System.  Here, a zero-Hopf equilibrium is an equilibrium of the differential system where the Jacobian matrix has a zero eigenvalue and a pair of purely imaginary conjugate eigenvalues.

In our study, we shall apply some recent developments of the Averaging Theory to improve the results of \cite{Ll} in two directions:

\smallskip

\noindent {\bf Case  A}:  Firstly,  for $(a,b,c)=(\ov a, 1,\ov a)$, with $\ov a\in (-\sqrt{2},\sqrt{2})\setminus\{0\}$, one can see that the R\"{o}ssler System \eqref{sr} has a zero-Hopf equilibrium at the origin. In \cite{Ll}, assuming that the parameter vector $(a,b,c)$ is $\e$-close to $(\ov a, 1,\ov a)$, that is, $(a,b,c)=(\ov a, 1,\ov a)+\CO(\e),$  the existence of a periodic solution bifurcating from the zero-Hopf equilibrium at the origin for $\e=0$ has already been proven (see \cite[Theorem 2]{Ll}). Here, in our first main result (Theorem \ref{t1}), we provide  the existence of 
an invariant torus, caused by a Neimark-Sacker bifurcation, situated around this periodic solution (see Figures \ref{fig3a} and \ref{fig3b}). This kind of bifurcation had been previously indicated for the  R\"ossler System \cite{Al,BBAS}. Nevertheless, to the best of our knowledge, this is the first time that analytic generic conditions are provided ensuring  the existence of an invariant torus bifurcating from a zero-Hopf equilibrium in the R\"ossler System. 

\smallskip

\noindent {\bf Case  B}: Secondly, for $(a,b,c)=(0, \ov b,0)$, with $\ov b\in (-1,\infty)$,  again one can see that the R\"{o}ssler System \eqref{sr} has a zero-Hopf equilibrium at the origin. In \cite{Ll}, assuming that the parameter vector $(a,b,c)$ is $\e$-close to $(0, \ov b,0)$, that is, $(a,b,c)=(0, \ov b,0)+\CO(\e),$  the first-order averaging method has already been proven to not be able to detect any periodic solution bifurcating from the zero-Hopf equilibrium at the origin for $\e=0$ (see \cite[Theorem 3]{Ll}). This essentially means that the first-order averaged function, associated with the R\"{o}ssler System, does not have simple zeros. However, in general, it does not imply that such a bifurcation is not happening. Roughly speaking, in the research literature, the next natural step would usually consist in assuming some constrains on the first-order approximation (in $\e$) of the parameters such that the first-order averaged function vanishes identically, and then computing the simple zeros of the second-order averaged function. This method can be implemented at any order of perturbation. However, we shall see that this procedure fails in providing periodic solutions at least up to fifth-order (see Section \ref{sec:SA}). Here, in our second main contribution (Theorem \ref{t2}),  we shall apply a recent result on averaging theory (see \cite{CLN}), based on the Lyapunov-Schmidt reduction, which will allow us to use, simultaneously, the second- and third-order averaged functions for detecting  a periodic solution bifurcating from this zero-Hopf equilibrium (see Figure \ref{fig1}). In addition, we shall use the forth- and fifth- averaged functions to study the stability of this periodic solution.

\smallskip

This paper is organized as follows. In Section \ref{sec:ps}, we first introduce the bifurcation theory to study the existence of periodic solutions when the first-order averaged function is non-vanishing but can have, eventually, non-isolated zeros (see \cite{CLN}). Then, we apply this theory to study the existence of periodic solutions for {\bf Case A} and {\bf Case B} of the R\"ossler System \eqref{sr}. The stability properties of these periodic solutions are studied in Section \ref{sec:stab}, using mainly the theory of $k$-determined hyperbolicity for perturbed matrices (see \cite{Mu}). In Section \ref{sec:it}, we first introduce the recently developed theory for detecting invariant tori through the averaging theory (see \cite{ITCanNov2018}). Then, we apply this theory to study the existence of an invariant torus for {\bf Case A}  of the R\"ossler System \eqref{sr}.  In Section \ref{sec:ex}, we provide numerical examples for which our main results apply. Finally, a discussion of our main contributions is provided in Section \ref{sec:dis}.  

\bigskip

We summarize our main results as follows:

\smallskip

For {\bf Case A}, we consider the parameter vector $(a,b,c)$ of the R\"{o}ssler system \eqref{sr} $\e$-close to $(\ov a,1,\ov a).$ More specifically, we assume that 
\begin{equation}\label{pvCa}
(a, b, c)=\left(\ov{a}+\e \al_1+\e^2 \al_2 , 1+\e \beta_1+\e^2 \beta_2, \ov{a}+\e\gamma_1+\e^2\gamma_2\right)+\CO(\e^3),
\end{equation}
 with  $\ov{a}\in (-\sqrt{2}, \sqrt{2})\setminus \{0\}$ and $\e,\al_i,\beta_i,\gamma_i\in\R,$ for $i=1,2.$ Also, define
\begin{equation}\label{CA}
\begin{array}{rl}
d_0=&\left(\alpha_1-\gamma_1+\beta_1  \ov{a} \left(\ov{a}^2-1\right)\right) \left(\alpha_1 \left(\ov{a}^2-1\right)+\ov{a} (\beta_1 -\ov{a} \gamma_1)+\gamma_1\right),\vspace{0.2cm}\\
d_1=& \gamma_1-\al_1+\beta_1  {\ov a},\,\,\text{and}\vspace{0.2cm}\\
 \ell_{1}=&12\pi \ov{a}^4 \left(\ov{a}^4-16\right)-
 
 \ov{a} \left(4 \ov{a}^8-12 \ov{a}^6+193 \ov{a}^4-640 \ov{a}^2-144\right) \sqrt{2-\ov{a}^2} 
 .\end{array}
\end{equation}

Notice that the parameters above, $d_0, d_1,$ and $\ell_1,$ do not depend on $\e.$
\begin{mtheorem}\label{t1}
Let $(a, b, c)$ be given by \eqref{pvCa}.
\begin{itemize}
\item[(i)] If $d_0>0$, then for $|\e|\neq0$ sufficiently small the R\"{o}ssler System \eqref{sr} admits a periodic solution $\varphi(t,\e)$ satisfying $\varphi(t,\e)\to (0,0,0)$ when $\e\to 0$. Moreover, for $\e>0$, such a periodic solution is asymptotically stable (resp. unstable) provided that $d_1> 0$  (resp. $d_1<0$). Denote $\varphi(t,\gamma_1,\e)=\varphi(t,\e).$
\item[(ii)]  In addition, if $\ell_1\neq0$, then there exist a smooth curve $\gamma(\e)$, defined for $\e>0$ sufficiently small and satisfying $\gamma(\e)=\ov\gamma_1+\CO(\e)$ with $\ov\gamma_1=\al_1-\ov a \beta_1$, and intervals $J_{\e}$ containing $\gamma(\e)$ such that a unique invariant torus bifurcates from the periodic solution $\varphi(t,\gamma(\e),\e)$ as $\gamma_1$ passes through $\gamma(\e).$ Such a torus exists whenever $\gamma_1\in J_{\e}$ and $\ell_1(\gamma_1-\gamma(\e))>0,$ and surrounds the periodic solution $\f(t,\gamma_1,\e).$ In addition,  if $\ell_1>0$ (resp. $\ell_1<0$) the torus is unstable (resp. asymptotically stable), whereas the periodic solution $\varphi(t,\gamma_1,\e)$ is asymptotically stable (resp. unstable).
\end{itemize}
\end{mtheorem}
The proof of Theorem \ref{t1} will be split into several propositions in the following sections. Statement (i) will follow from Propositions \ref{p1} and \ref{p3} of Sections \ref{sec:ps} and \ref{sec:stab}, respectively, and Statement (ii) will follow from Proposition \ref{p5} of Section \ref{sec:it}.

\bigskip

For {\bf Case B}, we consider the parameter vector $(a,b,c)$ of the R\"{o}ssler system \eqref{sr} $\e$-close to $(0, \ov b,0),$ with $\ov b\in (-1,\infty).$ More specifically, taking $\ov b=\omega^2-1,$ we assume that
\begin{equation}\label{pvCb}
(a,b,c)=(\al(\e), \omega^2-1+\beta(\e), \gamma(\e)), \text{ with }\omega>0,\, \omega \notin\{1, \sqrt{2}\},
\end{equation}
and
\[
\al(\e)=\sum_{i=1}^5\e^i\al_i+\CO(\e^6), \,\, \beta(\e)=\sum_{i=1}^5\e^i\beta_i+\CO(\e^6),\,\, \text{and}\,\, \gamma(\e)=\sum_{i=1}^5\e^i\gamma_i+\CO(\e^6).
\] 
with $\e,\al_i,\beta_i,\gamma_i\in\R,$ for $i=1,\ldots, 5.$
 Also, define
\begin{equation}\label{CB}
\begin{array}{l}
\lambda_1=\gamma_1 \left(\omega ^2-2\right),\quad \lambda_{2}=\gamma_1 \left(1-\omega ^2\right),\quad \text{and}\vspace{0.2cm}\\
 \delta=\dfrac{\beta_1 \gamma_2+\beta_2 \gamma_1+\gamma_1^3 \left(\omega ^4-3 \omega ^2+2\right)+\gamma_3 \left(\omega ^2-1\right)-\alpha_3}{\gamma_1 \left(1-\omega ^2\right)}.
\end{array}
\end{equation}
Notice that the parameters above, $\lambda_1, \lambda_2,$ and $\delta,$ do not depend on $\e.$
\begin{mtheorem}\label{t2}
Let $(a,b,c)$ be given by \eqref{pvCb}.
Suppose that $\al_1=\gamma_1(\omega^2-1)$,  $\al_2=\beta_1 \gamma_1+\gamma_2 (\omega^2-1)$, and $\delta>0$. Then, for $|\e|\neq0$ sufficiently small, the R\"{o}ssler System \eqref{sr} has a periodic solution $\varphi(t,\e)$  satisfying $\varphi(t,\e)\to (0,0,0)$.  Moreover, for $\e>0$, such periodic solution is asymptotically stable (resp. unstable) provided that  $\lambda_1<0$ and  $\lambda_2<0$ (resp. $\lambda_1>0$ or $\lambda_2>$0).
\end{mtheorem}

The proof of Theorem \ref{t2} will follow from Propositions \ref{p2} and \ref{p4} of Sections \ref{sec:ps} and \ref{sec:stab}, respectively.

\section{Bifurcation of periodic solutions}\label{sec:ps}

The averaging method is one of the main tools for detecting periodic solutions in regularly perturbed non-autonomous differential systems. This method has been generalized in several directions. In Section \ref{sec:at}, we introduce the classical version of the averaging theorem (Theorem \ref{CAT}) as well as its recent generalization (Theorem \ref{te}) based on the Lyapunov-Schmidt reduction. Then, in Sections \ref{sec:psca} and \ref{sec:pscb}, these theorems are applied to prove the existence of periodic solutions for {\bf Case A} and {\bf Case B} of the R\"{o}ssler System \eqref{sr}, respectively. Additionally, in Section \ref{sec:SA}, we show that in {\bf Case B}  the usual recursive method of applying the higher order averaging method fails in detecting periodic solutions of the R\"{o}ssler System up to order five. This emphasizes the importance of the method developed in \cite{CLN}.

\subsection{Averaging Theory and Bifurcation Functions}\label{sec:at} The averaging theory provides sufficient conditions for the existence of periodic solutions of non-autonomous differential systems written in the following {\it standard form}:
\begin{equation}\label{tm13}
\dot {\bf x}=\sum_{i=1}^k \e^i {\bf F}_i(t,{\bf
x})
+\e^{k+1} \widetilde{{\bf
F}}(t,{\bf x},\e), \quad  (t,{\bf x},\e)\in\R\times\Omega\times(-\e_0,\e_0),
\end{equation}
where $\Omega$ is an open bounded subset of $\mathbb{R}^2$ and  $\e_0$ is a small positive real number. It is assumed that $\bF_i,$ $i=1,\ldots,k,$ and $\widetilde\bF$ are sufficiently smooth functions $T$-periodic in the variable $t.$ The periodicity of system \eqref{tm13} allow us to see it defined in the cylinder $(t,\bx) \in \s^1\times \Omega,$ where $\s^1\equiv \R/T\mathbb{Z}.$
 
This method provides a sequence of functions $\bg_i$, $i=1,\ldots,k$, such that their simple zeros lead to isolated $T$-periodic solutions of system \eqref{tm13}.   These functions are obtained as follows. The solution $\bx(t,\bz,\e)$ of \eqref{tm13}, satisfying $\bx(0,\bz,\e)=\bz$, can be written as
\begin{equation*}\label{tri}
\bx(t,\bz,\e)=\bz+\sum_{i=1}^k\e^i \dfrac{\by_i(t,\bz)}{i!}+\CO(\e^{k+1}),
\end{equation*}
where the expressions for $\by_i$ are obtained by solving recursively the system of equations obtained from \eqref{tm13} (see \cite[Lemma 5]{CLN}). 
Hence, the Poincar\'{e} map $\Pi(\bz,\e)=\bx(T,\bz,\e)$ can be written as
\begin{equation}\label{quad}
\Pi(\bz,\e)=\bz+\sum_{i=1}^k\e^i\bg_i(\bz) +\CO(\e^{k+1}),
\end{equation}
where 
\begin{equation}\label{avf}
\bg_i(\bz)=\dfrac{\by_i(T,\bz)}{i!}
\end{equation} 
is called \textit{averaged functions of order $i$}  of system \eqref{tm13}.
Define $\bg_0=0.$ If for some $m\in\{1,2,\ldots,k\}$ we have that $\bg_0=  \dots= \bg_{m-1}= 0$ and $\bg_m\neq0$, then a simple zero of $\bg_m(\bz)$ provides a branch of fixed points $\bz(\e)$ for the map \eqref{quad}, that is, $\Pi(\bz(\e),\e)=\bz(\e)$. In turns, $\bx(t,\bz(\e),\e)$ corresponds to a branch of isolated $T$-periodic solutions of system \eqref{tm13}. This will be the content of Theorem \ref{CAT}.

For the readers' convenience, we present the expressions of the functions $\by_i,$ for $i=1,\ldots,5$. For the general expressions, the reader is addressed to \cite{CLN,nov17}. Consider the vector $\by=(y^1,\dots, y^n)\in\R^n$, we denote $\by^m=\big(\by, \cdots, \by \big)\in \R^{m n}$.  In the following expressions, we represent the $l-$Frechet derivative of $\bF_i(t,\bx)$ applied to a ``product'' of $l$ vectors as the multilinear map:
$$
\begin{array}{rr}
\displaystyle\dfrac{\p^l \bF_i}{\p \bx^l}(t,\bx)\bigodot^l_{j=1}\by_j=& \displaystyle\Bigg(\sum^n_{i_1,\dots, i_l=1} \dfrac{\partial^l F_i^1}{\partial x_{i_1}\dots \partial x_{i_l}}(t,\bx)y_1^{i_1}\dots y_l^{i_l}, \dots, \vspace{0.2cm}\\
& \displaystyle\sum^n_{i_1,\dots, i_l=1} \dfrac{\partial^l F_i^n}{\partial x_{i_1}\dots \partial x_{i_l}}(t,\bx)y_1^{i_1}\dots y_l^{i_l}\Bigg).
\end{array}
$$
For $i=1,\ldots,5$, we define the \textit{averaged functions of order $i$}  of system \eqref{tm13} as
\begin{equation}\label{avf}
\bg_i(\bz)=\dfrac{\by_i(T,\bz)}{i!},
\end{equation}
where
\begin{align*}
\by_1(t,\bz)=&\int_0^t \bF_1(\tau,\bz)\mathrm{d}\tau,\\
\by_2(t,\bz)=&\int_0^t
2\bF_2(\tau,\bz)+2\dfrac{\p
\bF_1}{\p
\bx}(\tau,\bz)\by_1(\tau,\bz) \mathrm{d}\tau,\\
\by_3(t,\bz)=&\int_0^t
6\bF_3(\tau,\bz)+6\dfrac{\p
\bF_2}{\p
\bx}(\tau,\bz)\by_1(\tau,\bz)\\
&+3\dfrac{\p^2 \bF_1}{\p
\bx^2}(\tau,\bz)\by_1(\tau,\bz)^2+3\dfrac{\p \bF_1}{\p
\bx}(\tau,\bz)\by_2(\tau,\bz)\mathrm{d}\tau,\\
\by_4(t,\bz)=&\int_0^t
24\bF_4(\tau,\bz)+24\dfrac{\p \bF_3}{\p
\bx}(\tau,\bz)\by_1(\tau,\bz) +12\dfrac{\p^2 \bF_2}{\p
\bx^2}(\tau,\bz)\by_1(\tau,\bz)^2+\\
&12\dfrac{\p \bF_2}{\p
\bx}(\tau,\bz)\by_2(\tau,\bz)+12\dfrac{\p^2 \bF_1}{\p \bx^2}(\tau,\bz)\by_1(\tau,\bz)\odot
\by_2(\tau,\bz)\\
&+4\dfrac{\p^3 \bF_1}{\p
\bx^3}(\tau,\bz)\by_1(\tau,\bz)^3+4\dfrac{\p \bF_1}{\p \bx}(\tau,\bz)\by_3(\tau,\bz)\mathrm{d}\tau,\\
\by_5(t,\bz)=&\int_0^t
120\bF_5(\tau,\bz)+120\dfrac{\p \bF_4}{\p
\bx}(\tau,\bz)\by_1(\tau,\bz) \vspace{0.3cm}\\
&+60\dfrac{\p^2 \bF_3}{\p
\bx^2}(\tau,\bz)\by_1(\tau,\bz)^2+60\dfrac{\p \bF_3}{\p
\bx}(\tau,\bz)\by_2(\tau,\bz)\vspace{0.3cm}\\
&+60\dfrac{\p^2 \bF_2}{\p \bx^2}(\tau,\bz)\by_1(\tau,\bz)\odot
\by_2(\tau,\bz)+20\dfrac{\p^3 \bF_2}{\p
\bx^3}(\tau,\bz)\by_1(\tau,\bz)^3\vspace{0.3cm}\\
&+20\dfrac{\p \bF_2}{\p \bx}(\tau,\bz)\by_3(\tau,\bz)+20\dfrac{\p^2
\bF_1}{\p \bx^2}(\tau,\bz)\by_1(\tau,\bz)\odot \by_3(\tau,\bz)\vspace{0.3cm}\\
&+15\dfrac{\p^2 \bF_1}{\p
\bx^2}(\tau,\bz)\by_2(\tau,\bz)^2+30\dfrac{\p^3 \bF_1}{\p
\bx^3}(\tau,\bz)\by_1(\tau,\bz)^2\odot \by_2(\tau,\bz) \vspace{0.3cm}\\
&+5\dfrac{\p^4 \bF_1}{\p
\bx^4}(\tau,\bz)\by_1(\tau,\bz)^4+5\dfrac{\p \bF_1}{\p
\bx}(\tau,\bz)\by_4(\tau,\bz)\vspace{0.3cm}\mathrm{d}\tau.
\end{align*}
 With these functions the classical averaging method for finding periodic solutions can be summarized by the following theorem, which relates zeros of the first non-vanishing averaged function to the existence of periodic solutions of the non-autonomous differential system \eqref{tm13}.

\begin{theorem}[\cite{DLT}]\label{CAT}
Assume that, for some $m\in\{1,\ldots,k\}$, $\bg_0=\cdots\bg_{m-1}=0$ and $\bg_m\neq0$. If there exists $\bz^*\in \Omega$
such that $\bg_m(\bz^*)=0$ and $| D\bg_m(\bz^*)|\neq 0$, then for $|\e|\neq0$ sufficiently small there exists an isolated $T$-periodic solution
$\f(t,\e)$ of system \eqref{tm13} such that $\f(0,0)=\bz^*$.
\end{theorem}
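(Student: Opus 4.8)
The plan is to recast the search for $T$-periodic solutions of \eqref{tm13} as the search for zeros of a displacement (Poincaré) map, and then to extract a nondegenerate branch of such zeros via the implicit function theorem. Let $\bx(t,\bz,\e)$ denote the solution of \eqref{tm13} with $\bx(0,\bz,\e)=\bz$; since the $\bF_i$ and $\wt\bF$ are smooth, $\bx$ depends smoothly on $(t,\bz,\e)$ over the compact interval $[0,T]$. Because \eqref{tm13} is $T$-periodic in $t$, a solution is $T$-periodic if and only if its initial condition is a fixed point of the time-$T$ map, that is, a zero of the displacement function $\bd(\bz,\e):=\bx(T,\bz,\e)-\bz$. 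The whole argument rests on the following expansion, which I would establish first:
$$\bx(t,\bz,\e)=\bz+\sum_{i=1}^5\frac{\e^i}{i!}\by_i(t,\bz)+\CO(\e^6),$$
uniformly for $(t,\bz)\in[0,T]\times\Omega$, with the $\by_i$ given by the recursive integral formulas stated in the text.

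To prove this expansion I would pass to the integral (Volterra) form of \eqref{tm13},
$$\bx(t,\bz,\e)=\bz+\int_0^t\Big(\sum_{i=1}^5\e^i\bF_i(\tau,\bx(\tau,\bz,\e))+\e^6\wt\bF(\tau,\bx,\e)\Big)\,\mathrm{d}\tau,$$
insert the ansatz $\bx=\bz+\sum_{i\ge1}\e^i\bx_i(t,\bz)$, Taylor-expand each $\bF_i(\tau,\bx)$ about $\bz$ using the multilinear derivatives $\p^l\bF_i/\p\bx^l$ introduced above, and match equal powers of $\e$. Collecting the coefficient of $\e^k$ reproduces exactly the integral recursion defining $\bx_k=\by_k/k!$; this is the bookkeeping-heavy step, as it requires the Fa\`a di Bruno--type combinatorics that convert the chain rule for $\bF_i\circ\bx$ into the multilinear sums appearing in $\by_2,\dots,\by_5$. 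Smooth dependence on parameters guarantees the remainder is genuinely $\CO(\e^6)$ in the $C^1$ sense. Evaluating at $t=T$ and recalling $\bg_i=\by_i(T,\cdot)/i!$ then gives $\bd(\bz,\e)=\sum_{i=1}^5\e^i\bg_i(\bz)+\CO(\e^6)$.

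With the hypothesis $\bg_1\equiv\cdots\equiv\bg_{l-1}\equiv0$, the first $l-1$ terms vanish identically, so $\bd(\bz,\e)=\e^l\big(\bg_l(\bz)+\CO(\e)\big)$. I would then introduce the reduced map
$$\wh\bd(\bz,\e):=\begin{cases}\bd(\bz,\e)/\e^l,&\e\neq0,\\ \bg_l(\bz),&\e=0,\end{cases}$$
and verify it is $C^1$ across $\e=0$; this is the genuinely delicate point, since after dividing by $\e^l$ one must confirm that the tail remains continuously differentiable in $(\bz,\e)$. This follows from Taylor's theorem with integral remainder applied to the smooth map $\e\mapsto\bd(\bz,\e)$, whose $\e$-derivatives up to order $l-1$ vanish at $\e=0$ by the degeneracy hypothesis. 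Now $\wh\bd(\bz^*,0)=\bg_l(\bz^*)=0$, and $D_\bz\wh\bd(\bz^*,0)=D\bg_l(\bz^*)$ is invertible precisely because $|D\bg_l(\bz^*)|\neq0$. The implicit function theorem then yields a $C^1$ branch $\bz(\e)$ with $\bz(0)=\bz^*$ and $\wh\bd(\bz(\e),\e)=0$, hence $\bd(\bz(\e),\e)=0$ for all sufficiently small $\e$. The corresponding solution $\f(t,\e):=\bx(t,\bz(\e),\e)$ is the desired $T$-periodic orbit, with $\f(0,0)=\bz(0)=\bz^*$. The main obstacle is thus twofold: the combinatorial identification of the stated $\by_i$ with the Taylor coefficients of the flow (laborious but mechanical), and the regularity of $\wh\bd$ at $\e=0$, which is exactly what legitimizes applying the implicit function theorem to the rescaled displacement rather than to $\bd$ itself.
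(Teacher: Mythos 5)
Your proposal is correct, but note that this paper never proves Theorem \ref{CAT} at all: it is quoted verbatim from \cite{DLT}, so there is no internal proof to compare against. Your argument is essentially the standard one from that cited line of work: write the displacement function $\bd(\bz,\e)=\bx(T,\bz,\e)-\bz$, establish by induction on powers of $\e$ (via the Volterra form and Fa\`a di Bruno bookkeeping) that its $i$-th Taylor coefficient in $\e$ is exactly $\bg_i(\bz)=\by_i(T,\bz)/i!$, and use the degeneracy hypothesis $\bg_1=\dots=\bg_{l-1}=0$ to factor $\bd(\bz,\e)=\e^l\big(\bg_l(\bz)+\CO(\e)\big)$. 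The one genuine difference is the final step: the cited reference (like \cite{B04}) concludes via Brouwer degree, which works under weaker regularity and only needs the degree of $\bg_l$ at $\bz^*$ to be nonzero, whereas you divide by $\e^l$ and apply the implicit function theorem to the rescaled displacement. Under the stated hypothesis $|D\bg_l(\bz^*)|\neq 0$ and the smoothness assumed for \eqref{tm13}, your route is perfectly valid, and it buys something the degree argument does not: local uniqueness of the periodic solution and a $\CC^1$ branch $\e\mapsto\bz(\e)$ of initial conditions, which is in fact the kind of information this paper later exploits (cf. Remark \ref{rem1} and Section \ref{sec:stab}). Your identification of the two delicate points — the combinatorial identification of the $\by_i$ with the flow's Taylor coefficients, and the $\CC^1$ regularity of $\bd/\e^l$ across $\e=0$ (Hadamard-type lemma, using that the vanishing Taylor coefficients vanish identically in $\bz$) — is accurate; neither is a gap, just the places where care is required.
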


The previous result  says that a simple zero of the first non-vanishing averaged function $\bg_m$ corresponds to a periodic solution of system \eqref{tm13}. In the case that the zero is not simple but isolated, one can still use some topological version of Theorem \ref{CAT} to ensure the existence of  periodic solutions (see, for instance, \cite{B04,DLT}). However, it cannot be used when the zero is not isolated.
This problem has been addressed in \cite{CLN} and we present its main result in the sequel. 

Let $\pi:\R^m\times\R^{n-m} \rightarrow\R^m$ and
$\pi^{\perp}:\R^m\times \R^{n-m} \rightarrow\R^{n-m}$  be the
projections onto the first $m$ coordinates and onto the last $n-m$
coordinates, respectively. Denote $\bz\in \Omega$ as $\bz=(u,v)\in \R^m\times\R^{n-m}$. Assume that the first-order averaged function $\bg_1$ vanishes on the set
\begin{equation*}\label{grp}
\CZ=\lbrace \bz_u =(u,\mathcal{B}(u)):u \in \ov V  \rbrace\subset U,
\end{equation*}
where $m<n$ are positive integers, $V$ is an open bounded subset of $\R^m,$ and  $\mathcal{B}:\ov V\rightarrow \R^{n-m}$ is a  $\CC^k$ function. Thus, $\CZ$ is a set of non-isolated zeros of $\bg_1$ and, consequently, Theorem \ref{CAT} cannot be applied. Nevertheless, as performed in \cite{CLN}, the Lyapunov-Schmidt reduction can be used to obtain sufficient conditions for the existence of isolated $T$-periodic solutions bifurcating from $\CZ$ as follows. First, we notice that the equation $\bz=\Pi(\bz,\e)$ is equivalente to the following system of equations
\begin{equation}\label{SE}
\left\{\begin{array}{l}
0=u-\pi \Pi(u,v,\e),\vspace{0.2cm}\\
0=v-\pi^{\perp} \Pi(u,v,\e).\\
\end{array}\right.
\end{equation}
Under convenient assumptions, the {\it implicit function theorem} can be used to find a function $\ov{\mathcal{B}}(u,\e),$ satisfying $\ov{\mathcal{B}}(u,0)=\mathcal{B}(u),$ which solves the second line of system \eqref{SE}, that is,  $\ov{\mathcal{B}}(u,\e)=\pi^{\perp} \Pi(u,\ov{\mathcal{B}}(u,\e),\e)$. Then, substituting $v=\ov{\mathcal{B}}(u,\e)$ into the first line of system \eqref{SE}, we obtain a single equation to be solved, namely $\CF(u,\e)=u- \pi \Pi(u,\ov{\mathcal{B}}(u,\e),\e)=0.$ Expanding $\CF(u,\e)$ around $\e=0,$ we get the bifurcation functions $f_i$, for $i=1,\ldots,k-1,$
\begin{align*}
\CF(u,\e)=\sum_{i=1}^{k-1} \e^i f_i(u)+\CO(\e^{k+1}),
\end{align*}
which will be given in terms of the derivatives $\gamma_j(u)=(\partial^j\ov{\mathcal{B}}/\partial \e^j)(u,0),$ $j=1,\ldots,i,$ and averaged functions $\bg_j,$ $j=1,\ldots,i+1.$ Denote $f_0=0.$ Notice that, if for some $m\in\{1,\ldots,k\}$ we have $f_0=\cdots= f_{m-1}=0$ and $f_m\neq0$, then a simple zero of $f_m(u)$ provides a branch of zeros $u(\e)$ of $\CF$, that is, $\CF(u(\e),\e)=0.$ Consequently, $\bz(\e)=\big(u(\e),\ov{\mathcal{B}}(u(\e),\e)\big)$ is a branch of solutions of system \eqref{SE}, that is, fixed points for the map \eqref{quad}. Again, $\bx(t,\bz(\e),\e)$ corresponds to a branch of isolated $T$-periodic solutions of system \eqref{tm13}. This will be the content of Theorem \ref{te}.

In what follows, we present the expressions of the functions $\gamma_i$ and $f_i$ for $i=1,\ldots, 4$. Denote
\[
D \bg_1({\bf z}_u)=\begin{pmatrix}
\Lambda_u & \Gamma_u \\
B_u & \Delta_u
\end{pmatrix} 
\]
where $\Lambda_u=\partial_a\pi \bg_1(z_u)$, $\Gamma_u=\partial_b\pi
\bg_1(z_u)$, $B_u=\partial_a\pi^\perp \bg_1(z_u)$ and
$\Delta_u=\partial_b\pi^\perp \bg_1(z_u)$. 
The  {\it bifurcation function of order $i$} $f_i,$ for $i=1,\ldots 4,$ are defined as
\begin{equation}\label{bif1}
\begin{array}{rl}
\gamma_1(u)=&-\Delta_u^{-1}\pi^\perp \bg_{2}(\bz_u), \vspace{0.3cm}\\
f_1(u)=&\Gamma_u\gamma_1(u)+\pi {\bf g}_{2}({\bf z}_u),\\
\end{array}
\end{equation}

\begin{equation}\label{bif2}
\begin{array}{rl}
\gamma_2(u)=&-\Delta_u^{-1}\Bigg(\dfrac{\p^2\pi^\perp
\bg_1}{\partial b^2}(\bz_u)\gamma_1(u)^2
+2\dfrac{\partial\pi^\perp \bg_{2}}{\partial
b}(\bz_u)\gamma_1(u)+2\pi^\perp \bg_{3}(u)
\Bigg),\vspace{0.3cm}\\
f_2(u)=&\dfrac{1}{2}\Gamma_u\gamma_2(u)+\dfrac{1}{2}\dfrac{\p^2\pi
\bg_1}{\p b^2}(\bz_u)\gamma_1(u)^2+\dfrac{\partial\pi
\bg_{2}}{\partial
b}(z_u)\gamma_1(u)+\pi \bg_{3}(\bz_u), \vspace{0.3cm}\\
\end{array}
\end{equation}

\begin{equation}\label{bif3}
\begin{array}{rl}
\ga_3(u)=&-\De_u^{-1}\Bigg(\dfrac{\p^3\pi^\perp\bg_1}{\p
b^3}(z_u)\ga_1(u)^3 +3\dfrac{\p^2\pi^\perp\bg_1}{\p
b^2}(z_u)\ga_1(u)\odot \ga_2(u) \vspace{0.3cm}\\
&+3\dfrac{\p^2\pi^\perp \bg_{2}}{\p
b^2}(z_u)\ga_1(u)^2+2\dfrac{\p\pi^\perp \bg_{2}}{\p
b}(z_u)\ga_2(u)+6\dfrac{\p\pi^\perp \bg_{3}}{\p
b}(z_u)\ga_1(u)\vspace{0.3cm}\\
&+6\pi^\perp \bg_{4}(u) \Bigg),\vspace{0.3cm}\\
f_3(u)=&\dfrac{1}{6}\G_u\ga_3(u)+\dfrac{1}{6}\dfrac{\p^3\pi
\bg_1}{\p b^3}(z_u)\ga_1(u)^3+\dfrac{1}{2}\dfrac{\p^2\pi\bg_1}{\p
b^2}(z_u)\ga_1(u)\odot \ga_2(u)\vspace{0.3cm}\\
&+\dfrac{1}{2}\dfrac{\p^2\pi \bg_{2}}{\p
b^2}(z_u)\ga_1(u)^2+\dfrac{1}{2}\dfrac{\p\pi \bg_{2}}{\p
b}(z_u)\ga_2(u)+\dfrac{\p\pi \bg_{3}}{\p b}(z_u)\ga_1(u)\vspace{0.3cm}\\
&+\pi \bg_{4}(z_u), \vspace{0.3cm}\\
\end{array}
\end{equation}

\begin{equation}\label{bif4}
\begin{array}{rl}
\ga_4(u)=&-\De_u^{-1}\Bigg(\dfrac{\p^4\pi^\perp\bg_1}{\p
b^4}(z_u)\ga_1(u)^4 +3\dfrac{\p^2\pi^\perp\bg_1}{\p
b^2}(z_u)\ga_2(u)^2+4\dfrac{\p^2\pi^\perp\bg_1}{\p
b^2}(z_u)\ga_1(u)\\
&\odot\ga_3(u)+6\dfrac{\p^3\pi^\perp\bg_1}{\p
b^3}(z_u)\ga_1(u)^2\odot\ga_2(u)+4\dfrac{\p\pi^\perp \bg_{2}}{\p
b}(z_u)\ga_3(u)\\
&+4\dfrac{\p^3\pi^\perp\bg_1}{\p
b^3}(z_u)\ga_1(u)^3+12\dfrac{\p\pi^\perp \bg_{3}}{\p
b}(z_u)\ga_2(u)+12\dfrac{\p^2\pi^\perp \bg_{2}}{\p
b^2}(z_u)\ga_1(u)\\
 &\odot\ga_2(u)+12\dfrac{\p^2\pi^\perp \bg_{3}}{\p
b^2}(z_u)\ga_1(u)^2+24\dfrac{\p\pi^\perp \bg_{4}}{\p
b}(z_u)\ga_1(u)\Bigg)\vspace{0.3cm},\\
f_4(u)=&\dfrac{1}{24}\G_u \ga_4(u)+\dfrac{1}{24}\dfrac{\p^4\pi
\bg_1}{\p b^4}(z_u)\ga_1(u)^4+\dfrac{1}{4}\dfrac{\p^3\pi\bg_1}{\p
b^3}(z_u)\ga_1(u)^2\odot\ga_2(u)\vspace{0.3cm}\\
&+\dfrac{1}{8}\dfrac{\p^2\pi\bg_1}{\p
b^2}(z_u)\ga_2(u)^2+\dfrac{1}{6}\dfrac{\p^2\pi\bg_1}{\p
b^2}(z_u)\ga_1(u)\odot\ga_3(u)\vspace{0.3cm}\\
&+\dfrac{1}{6}\dfrac{\p^3\pi \bg_{2}}{\p
b^3}(z_u)\ga_1(u)^3+\dfrac{1}{2}\dfrac{\p^2\pi \bg_{2}}{\p
b^2}(z_u)\ga_1(u)\odot\ga_2(u)+\dfrac{1}{6}\dfrac{\p\pi
\bg_{2}}{\p b}(z_u)\ga_3(u)\vspace{0.3cm}\\
&+\dfrac{1}{2}\dfrac{\p^2\pi \bg_{3}}{\p
b^2}(z_u)\ga_1(u)^2+\dfrac{1}{2}\dfrac{\p\pi \bg_{3}}{\p
b}(z_u)\ga_2(u)+\dfrac{\p\pi \bg_{4}}{\p b}(z_u)\ga_1(u)+\pi \bg_{5}(z_u).
\end{array}
\end{equation}

The next result provides sufficient conditions for the existence of periodic solutions bifurcating from the set of non-isolated zeros $\CZ$ of the first-order averaged function.

\begin{theorem}[\cite{CLN}]\label{te}
Suppose that  $\bg_1(\bz_u)=0$ and $\det(\Delta_u )\neq 0$ for all $u \in \ov V$. 
 In addition, assume that, for some $m\in\{1,\ldots,k-1\}$, $f_0= \cdots f_{m-1}=0$ and $f_m\neq 0$.
 If there exists $u^*\in V$ such that $f_m(u^*)=0$ and $\det\left( Df_m(u^*)\right)\neq 0$, then
for $|\e|\neq 0$ sufficiently small there exists an isolated $T$-periodic solution $\f(t,\e)$ of system \eqref{tm13} such that $\f(0,0)=\bz_{u^*}$
\end{theorem}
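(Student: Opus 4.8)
The plan is to realize $T$-periodic solutions of \eqref{tm13} as zeros of the time-$T$ displacement map and to cope with the degeneracy of $\bg_1$ along $\CZ$ by a Lyapunov--Schmidt reduction followed by two applications of the implicit function theorem. First I would let $\bx(t,\bz,\e)$ be the solution of \eqref{tm13} with $\bx(0,\bz,\e)=\bz$ and Taylor-expand it in $\e$; by the very computation underlying the averaged functions \eqref{avf}, the displacement $\mathbf f(\bz,\e):=\bx(T,\bz,\e)-\bz$ satisfies $\mathbf f(\bz,\e)=\sum_{i=1}^5\e^i\bg_i(\bz)+\CO(\e^6)$, and for each fixed small $\e\neq0$ its zeros are exactly the initial conditions of $T$-periodic solutions. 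Writing $\bz=(a,b)$ and splitting $\mathbf f=(\pi\mathbf f,\pi^\perp\mathbf f)$, I would exploit that $\bg_1(\bz_u)=\mathbf 0$ on the whole manifold $\CZ$ while $\Delta_u=\partial_b\pi^\perp\bg_1(\bz_u)$ is invertible: the $b$-directions are the nondegenerate ones, and the kernel directions are those tangent to $\CZ$, parametrized by $u\in\ov V$.

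Next I would solve the auxiliary equation $\pi^\perp\mathbf f((u,\mathcal B(u)+\eta),\e)=0$ for $\eta$ near $0$. Since $\pi^\perp\bg_1(\bz_u)=0$, this equation is divisible by $\e$, and the rescaling $\eta=\e\zeta$ gives $\pi^\perp\mathbf f=\e^2\bigl(\Delta_u\zeta+\pi^\perp\bg_2(\bz_u)\bigr)+\CO(\e^3)$. Defining $G(u,\zeta,\e):=\e^{-2}\pi^\perp\mathbf f((u,\mathcal B(u)+\e\zeta),\e)$, extended by its limit at $\e=0$, yields a smooth map with $G(u,\zeta,0)=\Delta_u\zeta+\pi^\perp\bg_2(\bz_u)$, whose unique zero is $\zeta=\gamma_1(u)$ and whose partial derivative $\partial_\zeta G=\Delta_u$ is invertible. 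The implicit function theorem then produces a smooth branch $\zeta=\zeta(u,\e)$, hence $b=b(u,\e)=\mathcal B(u)+\e\zeta(u,\e)$ solving $\pi^\perp\mathbf f=0$; differentiating the identity $\pi^\perp\mathbf f((u,b(u,\e)),\e)\equiv0$ repeatedly in $\e$ and using the multilinear chain rule shows that the Taylor coefficients of $\zeta(u,\e)$ in $\e$ are precisely the $\gamma_i(u)$ of \eqref{bif1}--\eqref{bif4}.

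Substituting $b(u,\e)$ into the complementary projection gives the reduced bifurcation map $\mathcal F(u,\e):=\pi\mathbf f((u,b(u,\e)),\e)$, whose zeros correspond bijectively, via $\bz=(u,b(u,\e))$, to those of $\mathbf f$. Because $\pi\bg_1(\bz_u)=0$ as well, the $i=1$ contribution to $\mathcal F$ is $\e\bigl(\pi\bg_1(\bz_u)+\e\,\Gamma_u\gamma_1(u)+\CO(\e^2)\bigr)$, so $\mathcal F$ starts at order $\e^2$ with coefficient $\Gamma_u\gamma_1(u)+\pi\bg_2(\bz_u)=f_1(u)$; matching successive powers of $\e$ identifies all coefficients with the $f_i(u)$ of \eqref{bif1}--\eqref{bif4}, giving $\mathcal F(u,\e)=\sum_{i=1}^{4}\e^{i+1}f_i(u)+\CO(\e^6)$. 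Under the hypothesis $f_1=\cdots=f_{l-1}=0$, the $\e$-Taylor coefficients of $\mathcal F$ of orders $0,\dots,l$ all vanish, so by Taylor's theorem (Hadamard's lemma) the function $H(u,\e):=\e^{-(l+1)}\mathcal F(u,\e)$ extends across $\e=0$ with $H(u,0)=f_l(u)$. Since $H(u^*,0)=f_l(u^*)=0$ and $D_uH(u^*,0)=Df_l(u^*)$ is nonsingular, a final application of the implicit function theorem yields a branch $u=u(\e)$ with $u(0)=u^*$ and $\mathcal F(u(\e),\e)=0$ for $|\e|\neq0$ small. Setting $\bz(\e)=(u(\e),b(u(\e),\e))$ then gives $\mathbf f(\bz(\e),\e)=0$, hence a $T$-periodic solution $\varphi(t,\e)$ with $\varphi(0,0)=\bz_{u^*}$.

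The main obstacle will be the bookkeeping that the recursively determined Taylor coefficients of $b(u,\e)$ and of $\mathcal F$ coincide \emph{exactly} with the expressions \eqref{bif1}--\eqref{bif4}, including their factorial weights; this requires the multivariate Fa\`a di Bruno formula and careful tracking of how the $\e$-derivatives of $\bx(t,\bz,\e)$ feed into \eqref{avf}. A secondary technical point is verifying that the $\CC^4$-regularity of $\mathcal B$ together with the smoothness of the $\bF_i$ supplies enough differentiability for both implicit-function-theorem steps and for the smooth extension of $H$ across $\e=0$; this regularity budget is exactly what confines $l$ to $\{1,2,3,4\}$.
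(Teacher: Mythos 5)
Your proposal is correct and takes essentially the same route as the cited result: the paper only quotes Theorem \ref{te} from \cite{CLN}, where the proof is exactly your scheme of a Lyapunov--Schmidt reduction of the time-$T$ displacement map (whose $\e$-coefficients are the averaged functions $\bg_i$), an implicit-function-theorem step in the $\pi^\perp$-directions producing the $\gamma_i(u)$, and a second implicit-function-theorem step applied to the reduced equation whose Taylor coefficients are the bifurcation functions $f_i$. In particular, your $b(u,\e)$ and $\mathcal{F}(u,\e)$ coincide with the objects $\bar{\mathcal{B}}(u,\e)$ and $\e\,\CF^4(u,\e)$ that the paper records in Remark \ref{rem1}.
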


\begin{remark}\label{rem1} 
As noticed above, the method developed in \cite{CLN} merges averaging theory and Lyapunov-Schmidt reduction in order to obtain sufficient conditions for the existence of an initial condition $\bz(\e)$ such that  $\f(t,\e)=\bx(t,\bz(\e),\e)$ is  an isolated $T$-periodic solution of \eqref{tm13}. 
This initial condition 
is written as  $\bz(\e)=\left( u(\e),\ov{ \mathcal{B}}(u(\e),\e)\right)$, where the function $u(\e)$ satisfies $\CF(u(\e), \e)=0$ for $|\e|\neq0$ sufficiently small.  Consequently, it is possible to write the expansion of $\bz(\e)$ around $\e=0$ using the bifurcation functions defined above. For instance, considering $m=2$ in Theorem $\ref{te}$, we  can write
\begin{equation}\label{zk}
\bz(\e)=\bz^*+\e \bz_{1}+\e^2\bz_{2}+\CO(\e^3).
\end{equation}
Denoting $\bz^*=(u^*,\mathcal{B}(u^*))$, $\bz_1=(u_1,\mathcal{B}_1)$, and $\bz_2=(u_2,\mathcal{B}_2)$, we have 
\[
\begin{array}{rl}
u_{1}=&-\left(D f_2(u^*)\right)^{-1}f_3(u^*),\vspace{0.2cm}\\ 
u_{2}=&-\dfrac{1}{2}\left(Df_2(u^*)\right)^{-1}\left(D^2f_2(u^*)u_1 \odot u_1+2Df_3(u^*)u_1+2f_4(u^*)\right),\vspace{0.2cm}\\
\mathcal{B}_{1}=&D\mathcal{B}(u^*)u_1+\gamma_1(u^*),\quad \text{and}\vspace{0.2cm}\\
\mathcal{B}_{2}=&\dfrac{1}{2}D^2\mathcal{B}(u^*)u_1\odot u_1+D\mathcal{B}(u^*)u_2+D\gamma_1(u^*)u_1+\gamma_2(u^*). 
 \end{array}
 \]
 The expression \eqref{zk}  will be used in Section $\ref{sec:stab}$ for determining the stability of the periodic solution $\bx(t,\bz(\e),\e)$.
\end{remark}

\subsection{Existence of Periodic Solutions - Case {\bf A}}\label{sec:psca}

The proof concerning the existence of a periodic solution of the R\"{o}ssler System \eqref{sr} bifurcating from the zero-Hopf equilibrium at the origin was provided in \cite{Ll} (see Theorem $2$ of \cite{Ll}). Here, for the sake of completeness, we again perform the proof of such a result using the first order averaging theory.

\begin{proposition}\label{p1}
Let  $(a, b, c)=(\ov{a}+\e \al_1+\e^2 \al_2, 1+\e \beta_1+\e^2\beta_2, \ov{a}+\e\gamma_1+\e^2\gamma_2)+\CO(\e^3)$, with $\ov{a}\in (-\sqrt{2}, \sqrt{2})\setminus \{0\}$, and $\e,\al_i,\beta_i,\gamma_i\in\R$ for $i=1,2$, and consider $d_0$ as defined in \eqref{CA}. 
If $d_0>0$,
then for $|\e|\neq 0$ sufficiently small the R\"{o}ssler System \eqref{sr} has a periodic solution $\varphi(t,\e)$ satisfying $\varphi(t,\e)\to (0,0,0)$ when $\e\to 0$.
\end{proposition}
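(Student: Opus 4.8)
The plan is to apply the classical averaging theorem (Theorem \ref{CAT} with $l=1$) to the R\"ossler System \eqref{sr}, once it has been brought into the standard form \eqref{tm13}. The first task is therefore a change of variables that puts the system in a form suitable for averaging. Since the origin is a zero-Hopf equilibrium when $(a,b,c)=(\ov a,1,\ov a)$, the linearization has eigenvalues $0$ and $\pm i\sqrt{1-\ov a^2/?}$ (the purely imaginary pair reflecting the constraint $\ov a\in(-\sqrt 2,\sqrt 2)$), so I would first diagonalize (or put into real Jordan form) the linear part, separating the center direction associated with the imaginary pair from the direction associated with the zero eigenvalue. I would then rescale the phase-space variables by a suitable power of $\e$ (typically $\bx\mapsto \e\,\bx$) so that the quadratic nonlinearity $xz$ becomes an $\e$-order perturbation and the $\e$-perturbations $\e\al,\e\beta,\e\gamma$ of the parameters enter at the same order. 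Passing to cylindrical/polar coordinates $(r,\theta)$ in the center plane and taking $\theta$ as the new independent (time-like) variable, the system should reduce to a $2\pi$-periodic (or $T$-periodic) nonautonomous system of the form \eqref{tm13} with the planar variable playing the role of $\bx$.

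Once the system is in standard form, the core computation is the first order averaged function $\bg_1$ defined in \eqref{avf}, namely $\bg_1(\bz)=\frac{1}{T}\int_0^T \bF_1(\tau,\bz)\,\mathrm{d}\tau$ (up to the normalization $\by_1(T,\bz)$). I would compute this integral explicitly; because the nonlinearity is quadratic and the integrand is built from products of trigonometric functions arising from the polar substitution, the averaging integral is elementary and yields a polynomial (rational) map in the two averaged variables whose coefficients depend on $\ov a,\al,\beta,\gamma$. The next step is to locate a zero $\bz^*$ of $\bg_1$ and verify that it is simple, i.e. $\det D\bg_1(\bz^*)\neq0$. I expect the factorization of the Jacobian determinant at $\bz^*$ to be exactly the quantity $d_0$ from \eqref{CA}: the two factors of $d_0$ should correspond to the two pieces of the determinant (or to the product of the relevant eigenvalues/partial derivatives), so that the hypothesis $d_0\neq0$ is precisely the non-degeneracy condition $\det D\bg_1(\bz^*)\neq0$. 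With the simple zero in hand, Theorem \ref{CAT} immediately produces a $T$-periodic solution $\f(t,\e)$ of the transformed system for $|\e|\neq0$ small; undoing the rescaling and the change of coordinates gives a periodic orbit $\varphi(t,\e)$ of \eqref{sr} that collapses to the origin as $\e\to0$, since the rescaling carries everything back to an $\e$-neighborhood of $(0,0,0)$.

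The main obstacle I anticipate is purely computational bookkeeping rather than any conceptual difficulty: choosing the change of variables and the $\e$-scaling so that all relevant terms (the quadratic nonlinearity and all three parameter perturbations) appear at first order simultaneously, and then carrying the algebra through so that the resulting $\bg_1$ is tractable and its simple zero can be exhibited in closed form. In particular, I would need to be careful that the zero $\bz^*$ lies in the open bounded domain $\Omega$ and that the computation of $\det D\bg_1(\bz^*)$ genuinely reduces to the product structure of $d_0$; verifying that this determinant factors as the stated product $d_0$ is the delicate algebraic point, and it is what makes the hypothesis $d_0\neq0$ the natural and sharp genericity condition for the bifurcation. The final detail is to confirm that $\varphi(t,\e)\to(0,0,0)$ as $\e\to0$, which follows from the $\e$-rescaling of coordinates combined with the boundedness of the averaged solution.
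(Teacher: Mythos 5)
Your proposal follows essentially the same route as the paper's proof: Jordan normal form for the linear part, the rescaling $(\ov X,\ov Y,\ov Z)=\e(X,Y,Z)$, cylindrical-type coordinates with $\theta$ as the new independent variable, explicit computation of $\bg_1$, location of its zeros $(\bar r_\pm,\bar z_\pm)$ (one per sign of $\bar a$, lying in the domain $r>0$), verification that $\det D\bg_1(\bar r_\pm,\bar z_\pm)$ is a nonzero multiple of $d_0$, and an application of Theorem \ref{CAT} with $l=1$ followed by undoing the changes of variables. The only cosmetic difference is that the paper uses the cylindrical-like substitution $(X,Y,Z)=(r\cos\theta,r\sin\theta,rz)$ rather than plain cylindrical coordinates, and the imaginary eigenvalues you left undetermined are $\pm i\sqrt{2-\ov a^2}$; neither affects the validity of your plan.
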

\begin{proof}
As noticed in the introduction, for $(a, b, c)=(\ov{a}, 1, \ov{a}),$ with $\ov{a}\in (-\sqrt{2}, \sqrt{2})\setminus \{0\}$, the R\"{o}ssler System \eqref{sr} has a zero-Hopf equilibrium at the origin. As usual, this allow us to write the R\"{o}ssler System \eqref{sr} in the standard form \eqref{tm13} in order to use the averaging theory for detecting its periodic solutions. We start by writing the linear part of the R\"ossler System \eqref{sr} in its Jordan normal form, so consider the linear change of variables
$$(x,y,z)=\left(\ov{a} \left(\ov{Z}-\dfrac{\ov{Y}}{\sqrt{2-\ov{a}^2}}\right)+\ov{X},\dfrac{\ov{Y}}{\sqrt{2-\ov{a}^2}}-\ov{Z},-\dfrac{\left(\ov{a}^2-1\right) \ov{Y}}{\sqrt{2-\ov{a}^2}}+\ov{a} \ov{X}+\ov{Z}\right)
.$$
In addition, taking $(\ov X,\ov Y,\ov Z)=\e( X,  Y,  Z)$, we get
\begin{align*}
\dot{X}=&-\sqrt{2-\ov{a}^2} Y+\e \left(\dfrac{\al_1 \ov{a} Y}{\sqrt{2-\ov{a}^2}}-\al_1 \ov{a} Z\right)+\CO(\e^2),\\
\dot{Y}=&\sqrt{2-\ov{a}^2} X+\e\Bigg(\dfrac{Y \left(-\al_1+\left(2 \ov{a}^2-1\right) X+\ov{a} (\beta_1+\ov{a} (\al_1+\ov{a} Z-\gamma_1))+\gamma_1\right)}{\ov{a}^2-2}\\
&+\dfrac{\sqrt{2-\ov{a}^2}}{\left(\ov{a}^2-2\right)^2} \Big(-\ov{a}^4 Z (\al_1+X)+\ov{a}^3 \left(-\left(X^2-\gamma_1 X-Y^2+Z (\beta_1+Z)\right)\right)\\
&+\ov{a}^2 (X (Z-\beta_1)+Z (3 \al_1+\gamma_1))+\ov{a} \left(2 X (X-\gamma_1)-Y^2+2 Z (\beta_1+Z)\right)\\
&+2 X (\beta_1+Z)-2 Z (\al_1+\gamma_1)\Big)\Bigg)+\CO(\e^2),\\
\dot{Z}=&\e\Bigg(\dfrac{1}{\left(\ov{a}^2-2\right)^2}\Big(-\ov{a}^4 X Z-\ov{a}^3 \left(\left(X^2-\gamma_1 X-Y^2+Z (\beta_1+Z)\right)\right)\\
&+\ov{a}^2 (X (Z-\beta_1)+Z (\gamma_1-\al_1))+\ov{a} \left(2 X (X-\gamma_1)-Y^2+2 Z (\beta_1+Z)\right)\\
&+2 (X (\beta_1+Z)+Z (\al_1-\gamma_1))\Big)-\dfrac{\sqrt{2-\ov{a}^2} Y}{\left(\ov{a}^2-2\right)^2} \Big(\al_1+\left(2 \ov{a}^2-1\right) X\\
&+\ov{a} \left(\beta_1+\ov{a}^2 Z-\ov{a} \gamma_1\right)+\gamma_1\Big)\Bigg)+\CO(\e^2).
\end{align*}
Now, consider the cylindrical variables 
\begin{equation}\label{cylin}
(X,Y,Z)=(r\cos \theta,r\sin \theta, z).
\end{equation}
Notice that $\dot \T=\sqrt{2-\ov{a}^2}+\CO(\e),$ which is positive for $|\e|$ sufficiently small. Therefore, by doing a time-rescaling, $\theta$ can be taken as the new time of the system so that the R\"ossler System  \eqref{sr} becomes the following non-autonomous differential system
\begin{equation}\label{CAs}
\begin{split}
\dfrac{d r}{d \theta}=& \dfrac{\e \, r }{4 \left(\ov{a}^2-2\right)^2}\left(\ov{a} \left(3-2 \ov{a}^2\right) r \sin (3 \theta )-2 \left(\ov{a}^2-2\right) \sin (2 \theta ) \left(\beta_1+\ov{a}^2 z+\al_1 \ov{a}-\ov{a} \gamma_1+z\right)\right)\\
-&\sin \theta  \big(4 \al_1 \ov{a}^4 z+\ov{a}^3 \left(4 z (\beta_1+z)-2 r^2\right)-4 \ov{a}^2 z (3 \al_1+\gamma_1)+\ov{a} \left(r^2-8 z (\beta_1+z)\right)\\
+&8 z (\al_1+\gamma_1)\big)+\dfrac{\e\,\sqrt{2-\ov{a}^2}}{4 \left(\ov{a}^2-2\right)^2} \left(\cos \theta  \left(\left(1-2 \ov{a}^2\right) r^2+4 \al_1 \ov{a} \left(\ov{a}^2-2\right) z\right)\right.\\
+&\left.r \left(\left(2 \ov{a}^2-1\right) r \cos (3 \theta )-4 \sin ^2(\theta ) (-\al_1+\ov{a} (\beta_1+\ov{a} (\al_1+\ov{a} z-\gamma_1))+\gamma_1)\right)\right)+\CO(\e^2)\vspace{0.1cm}\\
=&\e\,F^1_1(\theta,r,z)+\e^2\,F^1_2(\theta,r,z)+\CO(\e^3),\vspace{0.3cm}\\
\dfrac{d z}{d \theta}=&-\dfrac{\e \,r \sin \theta }{\left(\ov{a}^2-2\right)^2} \left(\al_1+\left(2 \ov{a}^2-1\right) r \cos \theta +\ov{a} \left(\beta_1+\ov{a}^2 z-\ov{a} \gamma_1\right)+\gamma_1\right)\\
+& \dfrac{\e \sqrt{2-\ov{a}^2}}{2 \left(\ov{a}^2-2\right)^3}\left(2 \ov{a}^3 z (\beta_1+z)+\left(2 \ov{a}^2-3\right) \ov{a} r^2 \cos (2 \theta )+2 \left(\ov{a}^2-2\right) r \cos \theta  \big(\beta_1+\ov{a}^2 z\right.\\
-&\left.\ov{a} \gamma_1+z\big)+2 \ov{a}^2 z (\al_1-\gamma_1)-\ov{a} \left(r^2+4 z (\beta_1+z)\right)+4 z (\gamma_1-\al_1)\right)+\CO(\e^2)\vspace{0.1cm}\\
=&\e\,F^2_1(\theta,r,z)+\e^2\,F^2_2(\theta,r,z)+\CO(\e^3).
\end{split}
\end{equation}
Notice that the non-autonomous differential system \eqref{CAs} is  written in the standard form \eqref{tm13} for applying the averaging theorem. Thus, identifying
\[
t=\theta, \ T=2\pi, \ \bz=(r,z), \text{ and } \bF_1(\theta,r,z)=\Big(F^1_1(\theta,r,z),F_1^2(\theta,r,z)\Big),
\]
we compute the first-order averaged function \eqref{avf}, $\bg_1(r,z)=\big(\bg_1^1(r,z),\bg_1^2(r,z)\big)$, as
\begin{equation}\label{avfCA}
\begin{array}{rll}
\bg_1^1(r,z)=& \displaystyle\int^{2\pi}_0F^1_1(\theta,r,z)d\theta=&-\dfrac{r (-\al_1+\ov{a} (\beta_1+\ov{a} (\al_1+\ov{a} z-\gamma_1))+\gamma_1)}{2 \left(2-\ov{a}^2\right)^{3/2}},\vspace{0.2cm}\\

\bg_1^2(r,z)=&\displaystyle\int^{2\pi}_0F^2_1(\theta,r,z)d\theta
=& \Big(2 \ov{a}^2 z (\gamma_1-\al_1)-2 \ov{a}^3 z (\beta_1+z)+\ov{a} \left(r^2+4 z (\beta_1+z)\right)\\
&&+4 z (\al_1-\gamma_1)\Big)\dfrac{1}{2 \left(2-\ov{a}^2\right)^{5/2}}.
\end{array}
\end{equation}

The non-linear system $\bg_1(r,z) =(0,0)$ has two solutions $(\ov{r}_\pm,\ov{z})$, namely
\[
\ov{r}_\pm=\pm\dfrac{\sqrt{2 \left(2-\ov{a}^2\right)d_0}}{\ov{a}^3}\quad\text{and}\quad \ov{z}=\dfrac{\al_1-\al_1 \ov{a}^2+\left(\ov{a}^2-1\right) \gamma_1-\beta_1 \ov{a}}{\ov{a}^3}.
\]

Since the domain of the averaged function is $\R_+\times\R$, then for  $0<\ov{a}<\sqrt{2}$  only  the solution $(\ov{r}_+,\ov{z})$ is contained within the domain of $\bg_1$, and for $-\sqrt{2}<\ov{a}<0$ the only solution in the domain is $(\ov{r}_-,\ov{z})$. These solutions are the same as the ones obtained in \cite{Ll}. Moreover, the Jacobian determinant of $\bg_1$ at $(\ov{r}_\pm,\ov{z})$ is given by
\[
\begin{split}
\det\left(D\bg_1(\ov{r}_\pm,\ov{z})\right)=\dfrac{d_0}{\ov{a}^2 \left(\ov{a}^2-2\right)^3},
\end{split}
\]
and from hypothesis we have $\det\left(D\bg_1(\ov{r}_\pm,\ov{z})\right)\neq 0$. Thus, the result follows by applying Theorem  \ref{CAT} and going back through the change of variables \eqref{cylin}.
\end{proof}

\subsection{Existence of Periodic Solutions - Case {\bf B}}\label{sec:pscb}

Here, we are assuming that $(a,b,c)=(\al(\e), \omega^2-1+\beta(\e), \gamma(\e))$,  where $\omega>0$, $\omega \notin\{ 1, \sqrt{2}\}$,
\[
\al(\e)=\sum_{i=1}^5\e^i\al_i+\CO(\e^6), \,\, \beta(\e)=\sum_{i=1}^5\e^i\beta_i+\CO(\e^6),\,\, \text{and}\,\, \gamma(\e)=\sum_{i=1}^5\e^i\gamma_i+\CO(\e^6).
\]
\begin{proposition}\label{p2}
Consider $\de$ as defined in \eqref{CB} and assume that $\al_1=\gamma_1(\omega^2-1)$, $\al_2=\beta_1 \gamma_1+\gamma_2 (\omega^2-1)$, $\omega>0$, $\omega\not\in \{1,\sqrt{2}\}$ and $\delta>0$. Then,  for  $|\e|\neq0$ sufficiently small, the R\"{o}ssler System \eqref{sr} has a periodic solution $\varphi(t,\e)$ satisfying $\varphi(t,\e)\to (0,0,0)$ when $\e\to 0$.
\end{proposition}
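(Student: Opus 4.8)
The plan is to mirror the argument of Proposition \ref{p1}, but since for Case B the first order averaged function has no simple zeros, the classical Theorem \ref{CAT} is useless and the degenerate averaging result Theorem \ref{te} must be invoked instead. Thus the three hypotheses $\al_1=\gamma_1(\omega^2-1)$, $\al_2=\beta_1\gamma_1+\gamma_2(\omega^2-1)$, and $\delta>0$ are expected to play the following roles, respectively: collapsing the zero set of $\bg_1$ onto a curve $\CZ$, annihilating the first bifurcation function $f_1$, and producing a simple positive zero of the second bifurcation function $f_2$.

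First I would put the linear part of \eqref{sr} at the zero-Hopf parameters $(0,\omega^2-1,0)$ into real Jordan normal form by a linear change of coordinates $(x,y,z)\mapsto(\ov X,\ov Y,\ov Z)$, so that the unperturbed linear flow is a rotation of frequency $\omega$ in the $(\ov X,\ov Y)$-plane together with the zero eigendirection along $\ov Z$; the characteristic polynomial $-\lambda(\lambda^2+\omega^2)$ confirms this spectrum and shows why $\omega>0$ is required. I would then rescale $(\ov X,\ov Y,\ov Z)=\e(X,Y,Z)$, pass to the cylindrical-like variables \eqref{cylin}, $(X,Y,Z)=(r\cos\theta,r\sin\theta,rz)$, and take $\theta$ as the new independent variable. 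Since the parameters are prescribed as power series in $\e$ up to order five, this brings the system into the standard form \eqref{tm13} with $T=2\pi$ and $\by=(r,z)$, and I would expand the right-hand sides $\bF_i(\theta,r,z)$ up to $i=3$, which is the order needed to assemble $f_2$ through \eqref{bif2}.

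Next I would evaluate the first order averaged function $\bg_1(r,z)$ from \eqref{avf}. Writing $\bz=(a,b)=(r,z)$ with $m=1$ and $n=2$, the relation $\al_1=\gamma_1(\omega^2-1)$ is exactly what forces $\bg_1$ to vanish along an entire curve $\CZ=\{\bz_u=(u,\mathcal B(u)):u\in\ov V\}\subset\R^+\times\R$ rather than at isolated points; I would exhibit the explicit branch $\mathcal B$ and check that $\Delta_u=\p_b\pi^\perp\bg_1(\bz_u)\neq0$ along $\CZ$, using $\omega\notin\{1,\sqrt 2\}$ to keep the relevant denominators and $\Delta_u$ away from zero. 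With $\CZ$ and $\Delta_u$ in hand I would compute $\gamma_1(u)$ and $f_1(u)$ from \eqref{bif1} and verify that the second relation $\al_2=\beta_1\gamma_1+\gamma_2(\omega^2-1)$ makes $f_1\equiv0$. I would then turn to the second bifurcation function $f_2(u)$ in \eqref{bif2}, which couples the second and third order averaged functions $\bg_2$ and $\bg_3$; a direct computation should reduce $f_2(u)=0$ to a single algebraic equation whose positive root $u^*$ (proportional to $\sqrt\delta$) exists precisely when $\delta>0$. After confirming $Df_2(u^*)\neq0$, so that the zero is simple, Theorem \ref{te} with $l=2$ yields a $2\pi$-periodic solution $\f(t,\e)$ of the transformed system with $\f(0,0)=\bz_{u^*}$ for $|\e|\neq0$ small; undoing the cylindrical and linear changes of variables gives the periodic orbit $\varphi(t,\e)$ of \eqref{sr} with $\varphi(t,\e)\to(0,0,0)$ as $\e\to0$.

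The main obstacle is purely computational. Assembling $\bg_2$ and $\bg_3$ requires the iterated integrals of the first and second Frechet derivatives of the $\bF_i$ along the unperturbed solution, and substituting these into the nested expressions for $\gamma_1$, $\gamma_2$, $f_1$ and $f_2$ produces very large symbolic formulas, so a computer algebra system is indispensable. The delicate point is to verify that the two scalar constraints on $\al_1$ and $\al_2$ are precisely the ones that degenerate $\bg_1$ onto $\CZ$ and annihilate $f_1$; once this bookkeeping is under control, solving $f_2(u)=0$ and checking simplicity of its root are routine.
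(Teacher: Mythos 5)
Your proposal matches the paper's proof essentially step for step: Jordan normal form, the rescaling $(\ov X,\ov Y,\ov Z)=\e(X,Y,Z)$, passage to $\theta$ as independent variable and the standard form \eqref{s22}, then Theorem \ref{te} with $l=2$, where the constraint on $\al_1$ collapses the zeros of $\bg_1$ onto the continuum $\CZ=\{(r,0):r>0\}$ (with $\Delta_u=2\pi\gamma_1(\omega^2-2)/\omega\neq 0$), the constraint on $\al_2$ forces $f_1\equiv 0$, and $\delta>0$ yields the simple positive zero $r^*=4\omega\sqrt{\delta/3}$ of $f_2$. The only, immaterial, difference is that for Case B the paper uses plain cylindrical coordinates $(X,Y,Z)=(r\cos\theta,r\sin\theta,z)$ rather than the variables \eqref{cylin} with third component $rz$; since the zero curve sits at $z=0$, either choice produces the same structure.
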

\begin{proof}
As noticed in the introduction, for $(a, b, c)=(0, \ov{b}, 0),$ with $\ov{b}\in (-1, +\infty)$, the R\"{o}ssler System \eqref{sr} has a zero-Hopf equilibrium at the origin. Again, this allow us to write the R\"{o}ssler System \eqref{sr} in the standard form \eqref{tm13} in order to use the averaging theory for detecting its periodic solutions. We start by writing the linear part of the R\"ossler System  \eqref{sr} in its Jordan normal form, so consider the linear change of variables
$$(x,y,z)=\left(\ov X,\dfrac{\ov Y}{\omega }+\ov Z,\ov Y \left(\omega -\dfrac{1}{\omega }\right)-\ov Z \right).$$
In addition, taking $(\ov  X, \ov  Y, \ov Z)=\e(X,Y,Z)$, we see that the unperturbed system (that is, $\e=0$) in these new variables can be written as $\big(\dot X,\dot Y,\dot Z\big)=\big(- \omega Y,\omega X,0\big)$. Thus, considering cylindrical coordinates $(X,Y,Z)=(r\cos\theta,r\sin \theta,z)$, we see that $\dot \T=\omega+\CO(\e),$ which is positive for $|\e|$ sufficiently small. Therefore, by doing a time-rescaling, $\theta$ can be taken as the new time of the system so that the R\"ossler System \eqref{sr} becomes the following non-autonomous differential system
\begin{equation}\label{s22}
\dfrac{d r}{d \theta}=\sum_{i=1}^5\e^i F_{i}^1(\theta,r,z)+\CO(\e^6),\quad
\dfrac{d z}{d\theta}=\sum_{i=1}^5\e^i F_{i}^2(\theta,r,z)+\CO(\e^6)
\end{equation}
 where $(\theta,r,z)\in\R\times\R_+\times \R$. Due to the extent of the expressions of $F_i^j(\theta,r,z)$, $i=1,\ldots,5$ and $j=1,2$, we shall omit them here. However, they are trivially computed in terms of the parameters $\omega$, $\al_i,\beta_i,\gamma_i$, $i=1,\ldots,5.$

Notice that the non-autonomous differential system \eqref{s22} is  written in the standard form \eqref{tm13} for applying the averaging theorem. Thus, identifying
\[
t=\theta, \ T=2\pi, \ \bz=(r,z),  \text{ and } \bF_{i}(\theta,r,z)=\left( F_{i}^1(\theta,r,z), F_{i}^2(\theta,r,z)\right) \text{ for } i=1,\ldots,5,
\]
we compute the first-order averaged function \eqref{avf},
\begin{equation}\label{CBg1}
\bg_1(r,z)=\left(\dfrac{\pi  r \left(\al_1+\gamma_1 (1-\omega ^2)\right)}{\omega ^3},-\dfrac{2 \pi  z \left(\gamma_1+\al_1 (1-\omega ^2)\right)}{\omega ^3}\right),
\end{equation}
for $(r,z)\in\R_+\times\R.$ This function only has the trivial zero, which is not contained within the domain and, therefore, does not correspond to a periodic solution of \eqref{s22}. Consequently, no periodic solutions can be detected using the first-order averaged function. This fact had already been noticed in \cite{Ll}.

In order to follow the averaging method, we compute the second-order averaged function \eqref{avf}, $\bg_2(r,z)=\big(g^1_2(r,z),g^2_2(r,z)\big)$, as
\begin{align*}
g_2^1(r,z)=&\dfrac{\pi }{2 \omega ^6} \Big(\pi  r \left(\al_1+\gamma_1 \left(1-\omega ^2\right)\right)^2-\omega ^3 (r (\al_1 z-2 (\al_2+\gamma_2)+\gamma_1 (3 z-\beta_1))\\
&-2 \gamma_2 r \omega ^5\hspace{-0.8 mm}+2 z (\al_1\hspace{-0.8 mm}+\hspace{-0.8 mm}\gamma_1) (2 \al_1\hspace{-0.8 mm}+\hspace{-0.8 mm}\gamma_1))+\omega  (\al_1+\gamma_1) (r (4 z-3 \beta_1)+6 z (\al_1+\gamma_1))\hspace{-0.8 mm}\Big),\\
g_2^2(r,z)=&\dfrac{\pi }{2 \omega ^7} \Big(r^2 \left(1-\omega ^2\right) \left(\al_1 \left(\omega ^2-1\right)+\gamma_1 \left(2 \omega ^2-1\right)\right)+2 r \left(\omega ^2-1\right) (\al_1+\gamma_1) \\
&\left(\al_1 \left(2 \omega ^2-3\right)+\gamma_1 \left(\omega ^2-3\right)\right)+2 \omega  z \Big(2 \pi  \left(\al_1 \left(1-\omega ^2\right)+\gamma_1\right)^2+2 \al_2 \omega ^5\\
&+\omega ^3 (\al_1 (z-\beta_1)-2 (\al_2+\gamma_2))-3 \omega  (\al_1+\gamma_1) (z-\beta_1)\Big)\Big).
\end{align*}

In the research literature, the next natural step would usually consist in assuming some constraints on the first-order parameters (that is, $\alpha_1,$ $\beta_1,$ and $\gamma_1$) such that the first-order averaged function vanishes identically, and then computing the zeros of the second-order averaged function. This method can be implemented at any order of perturbation. Nevertheless, we shall see in Section \ref{sec:SA} that this procedure fails in providing periodic solutions up to order 5. Here, instead of vanishing the first-order averaged function, we shall use Theorem \ref{te} as follows.

Firstly, take $\al_1=\gamma_1(\omega^2-1)$. Thus, the first-order averaged function becomes 
$$
\bg_1(r,z)=\left( 0, \dfrac{2 \pi  \gamma_1 \left(\omega ^2-2\right) z}{\omega }\right).
$$
Notice that, in this case, the first-order averaged function has a continuum of zeros
$\CZ=\left\lbrace
\bz_r=\left(r,0\right):r>0
\right\rbrace.$ Moreover, the Jacobian matrix of $\bg_1$ on $\CZ$ can be written as 
$$
D{\bg_1}(\bz_r)=\left(
\begin{array}{cc}
 0 & 0 \\
 0 & \dfrac{2 \pi  \gamma_1 \left(\omega ^2-2\right)}{\omega } \\
\end{array}
\right).
$$

Thus, we  compute the first-order bifurcation function \eqref{bif1} as
\begin{align*}
f_1(r)=&\dfrac{\pi  r \left(\al_2-\beta_1 \gamma_1+\gamma_2 \left(1-\omega ^2\right)\right)}{\omega ^3}.
\end{align*}
This function has no positive simple zeros. In order to use Theorem \ref{te}, we  should impose some constrains on the parameters appearing in $f_1$ (that is $\beta_1,$ $\gamma_1,$ $\alpha_2,$ and $\gamma_2$) in order to vanish $f_1,$ then computing the zeros of the second-order bifurcation function $f_2.$ For that, we take $\al_2=\beta_1 \gamma_1+\gamma_2 \left(\omega ^2-1\right)$.

Now, in order to obtain the second-order bifurcation function \eqref{bif2} we must compute the third-order averaged function \eqref{avf}, $\bg_3(r,z)=\big(g^1_3(r,z),g^2_3(r,z)\big)$, as
\begin{align*}
g_3^1(r,z)=&\dfrac{\pi}{16 \omega ^5}  \Big(\gamma_1r^3 \left(1- \omega ^2\right)+16 \gamma_1^2 r^2 \left(\omega ^2-1\right)+4 r 
\Big(4 \omega ^2 \Big(\al_3-\beta_1 \gamma_2-\beta_2 \gamma_1\\
&-3 \gamma_1^3 \left(\omega ^4-3 \omega ^2+2\right)-\gamma_3 \omega ^2+\gamma_3\Big)+\gamma_1 \left(8-3 \omega ^2\right) z^2+z \left(\beta_1 \gamma_1 \left(\omega ^2-6\right)\right. \\
&-\left.2 \omega  \left(\omega ^2-2\right) \left(\pi  \gamma_1^2 \left(\omega ^2-2\right)+\gamma_2 \omega \right)\right)\Big)-8 \gamma_1 \omega ^2 z \left(2 \left(\beta_1 \gamma_1 \left(\omega ^2+2\right)+2 \omega\right.\right.\\
&\left. \left. \left(\omega ^2-2\right) \left(\pi  \gamma_1^2 \left(\omega ^2-2\right)+2 \gamma_2 \omega \right)\right)+\gamma_1 \left(11 \omega ^2-26\right) z\right)\Big) ,\\
g_3^2(r,z)=& \dfrac{\pi}{24 \omega ^7}  \Big(r^2 \left(6 \omega ^2 \left(\beta_1 \gamma_1 \left(\omega ^2-3\right)-2 \omega  \left(\omega ^2-1\right) \left(\pi  \gamma_1^2 \left(\omega ^2-2\right)+\gamma_2 \omega \right)\right)\right.\\
&+\left.\left.\gamma_1 \left(2 \omega ^6-29 \omega ^4+37 \omega ^2-10\right) z\right)+12 \gamma_1 r \omega ^2 \left(2 \left(\beta_1 \gamma_1 \left(\omega ^4+3 \omega ^2-6\right)\right.\right. \right. \\
&+\left.\left. \left.  2 \omega  \left(\omega ^4-3 \omega ^2+2\right) \left(\pi  \gamma_1^2 \left(\omega ^2-2\right)+2 \gamma_2 \omega \right)\right)+3 \gamma_1 \left(\omega ^4-7 \omega ^2+6\right) z\right)\right.\\
&+ 2 \omega ^2 z \Big(4 \omega ^2 \Big(6 \al_3 \left(\omega ^2-1\right)-3 \beta_2 \gamma_1 \left(\omega ^2-4\right)+\gamma_1 \left(\omega ^2-2\right) \Big(\gamma_1^2 \Big(15 \left(\omega ^2-1\right) \\
&+4 \pi ^2 \left(\omega ^2-2\right)^2\Big)+12 \pi  \gamma_2 \omega  \left(\omega ^2-2\right)\Big)-6 \gamma_3\Big)-3 \beta_1^2 \gamma_1 \left(\omega ^2+6\right)+12 \beta_1 \omega  \\
&\left(2 \pi  \gamma_1^2 \left(\omega ^4-4\right)-\gamma_2 \omega  \left(\omega ^2-4\right)\right)+9 \gamma_1 \left(\omega ^2-6\right) z^2+6 z \Big(2 \omega  \left(\omega ^2-4\right)  \\
&\left(3 \pi  \gamma_1^2 \left(\omega ^2-2\right)+\gamma_2 \omega \right)-\beta_1 \gamma_1 \left(\omega ^2-12\right)\Big)\Big)\Big).
\end{align*}
Then, the second-order bifurcation function \eqref{bif2} can be written as

\begin{equation}\label{f2CB}
 f_2(r)=\dfrac{3 \pi  \gamma_1  \left(\omega ^2-1\right)r}{16 \omega ^5}\left(r^2-\dfrac{16 \omega ^2}{3} \delta\right),
\end{equation}
where 
$$
\delta=\dfrac{\beta_1 \gamma_2+\beta_2 \gamma_1+\gamma_1^3 \left(\omega ^4-3 \omega ^2+2\right)+\gamma_3 \left(\omega ^2-1\right)-\alpha_3}{\gamma_1 \left(1-\omega ^2\right)}.$$
Thus, for $\delta>0$, the second-order bifurcation function \eqref{f2CB} have the following unique simple zero within the domain $\R_+$,
$$
r^*=4\omega\sqrt{\dfrac{\delta}{3}}.
$$ 
The result follows directly from Theorem \ref{te} with $m=2.$
\end{proof}

\subsection{Fifth-Order Standard Analysis}\label{sec:SA}
In this section, we shall apply the usual higher order averaging method up to order five for studying periodic solutions of the non-autonomous differential system \eqref{s22}. We shall see that, in this case, this method does not provide any information about the existence of periodic solutions, which emphasizes the importance of the method developed in \cite{CLN} and employed in the proof of Proposition \ref{p2}.

Consider the first-order averaging function $\bg_1$ \eqref{CBg1}. As noticed in the proof of Proposition \ref{p2}, the non-linear system $\bg_1(r,z)=(0,0)$ has no solution in the domain $\R_+\times\R$. Therefore, as said before, in order to use Theorem \ref{CAT} for detecting periodic solutions of \eqref{s22}, we could assume values for the first-order parameters perturbation, $\al_1,\beta_1$, and $\gamma_1$, such that $\bg_1= 0$, and then computing the zeros of the second-order averaging function $\bg_2$. This procedure can be implemented at any order and is the usual way of applying the higher order averaging method for studying periodic solutions.

Notice that, for $\omega\neq \sqrt{2}$, $\bg_1= 0$ if, and only if, $\al_1=\gamma_1=0$. By assuming these values, the second-order averaged function can be written as
\begin{align*}
\bg_2(r,z)=\left(\dfrac{\pi  r \left(\al_2+\gamma_2 (1-\omega ^2)\right)}{\omega ^3},-\dfrac{2 \pi  z \left(\gamma_2+\al_2 (1-\omega ^2)\right)}{\omega ^3}\right),
\end{align*}
which is the same expression of $\bg_1$, just replacing $\al_1$ and $\gamma_1$ by $\al_2$ and $\gamma_2$, respectively. As before, the non-linear system $\bg_2(r,z)=(0,0)$ has no solution in the domain $\R_+\times\R$ and $\bg_2= 0$ if, and only if, $\al_2=\gamma_2=0$.

For $l=1,\ldots,4$, we can check that $\al_1=\ldots\al_l=\gamma_1=\ldots\gamma_l=0$ implies that
\begin{align*}
\bg_{l+1}(r,z)=\left(\dfrac{\pi  r \left(\al_{l+1}+\gamma_{l+1} (1-\omega ^2)\right)}{\omega ^3},-\dfrac{2 \pi  z \left(\gamma_{l+1}+\al_{l+1} (1-\omega ^2)\right)}{\omega ^3}\right).
\end{align*}
Again, the non-linear system $\bg_{l+1}(r,z)=(0,0)$ has no solution in the domain $\R_+\times\R$ and $\bg_{l+1}= 0$ if, and only if, $\al_{l+1}=\gamma_{l+1}=0$.

Consequently, up to order five, the usual recursive method does not provide any information about the existence of periodic solutions of the differential system \eqref{s22}.

\section{Stability of Periodic Solutions}\label{sec:stab}

In Section \ref{sec:ps}, sufficient conditions for the existence of periodic solutions for the R\"ossler System \eqref{sr} were provided. In this section, the stability of such periodic solutions will be studied. We shall essentially demonstrate that the periodic solution provided in {\bf Case A} has its stability determined by the Jacobian matrix of the first-order averaged function, which in this case is hyperbolic. On the other hand, the periodic solution provided in {\bf Case B} does not have its stability determined by the Jacobian matrix of the first-order averaged function, which in this case is not hyperbolic, so that studying its stability demands a more refined analysis.

As before, let  $\bx(t,\bz,\e)$ denote the solution of \eqref{tm13} satisfying $\bx(0,\bz,\e)=\bz$. As commented in Section \ref{sec:at}, the essence of Theorems \ref{CAT} and \ref{te} is to provide sufficient conditions that guarantee the existence of an initial condition $\bz(\e)\in \Omega$ such that $\f(t,\e)=\bx(t,\bz(\e),\e)$ is a branch of isolated $T$-periodic solutions of system \eqref{tm13}. From \eqref{quad},  the Poincar\'{e} map $\Pi(\bz,\e)=\bx(T,\bz,\e)$  of system \eqref{tm13} is given by
\begin{equation*}
\Pi(\bz,\e)=\bz+\e \bg_1(\bz)+\e^2\bg_2(\bz)+\e^3 \bg_3(\bz)+\CO(\e^4),
\end{equation*}
where  $\bg_i$ is the averaged function of order $i$ defined in \eqref{avf}. 
The stability of the periodic solution $\f(t,\e)$ can be determined by the eigenvalues of the Jacobian matrix $D_{\bz}\Pi(\bz(\e),\e)$, which can be expanded around $\e=0$ as follows:
\begin{equation*}\label{gk}
D_{\bz}\Pi(\bz(\e),\e)=Id+\e A_0+\e^2 A_1+\e^3 A_2+\CO(\e^3),
\end{equation*}
where $A_0= D{\bg_1}(\bz^*),$ $\bz^*=\bz(0)$. Recall that, if its eigenvalues of $D_{\bz}\Pi(\bz(\e),\e)$, $\widetilde\lambda_1(\e)$ and $\widetilde\lambda_2(\e),$  satisfy $|\widetilde\lambda_1(\e)|<1$ and $|\widetilde\lambda_2(\e)|<1$, then the  periodic solution $\f(x,\e)$  is asymptotically stable. Otherwise, it is unstable.

The next result provides the stability of the periodic solution $\f(t,\e)$ when the matrix $A_0$  is hyperbolic, that is, it has no eigenvalues over the imaginary axis of the complex plane:
\begin{theorem}[\cite{verh}]\label{teostab}
Consider the differential system \eqref{tm13} and suppose that the conditions of Theorem \ref{CAT} are satisfied for $m=1$. If all eigenvalues of  $D{\bg_1}(\bz^*)$  have negative real parts, then the corresponding periodic solution $\varphi(t, \e)$ of system \eqref{tm13} is asymptotically stable for  $\e>0$ sufficiently small. Conversely, if one of the eigenvalues has positive real part, then $\varphi(t,\e)$ is unstable.
\end{theorem}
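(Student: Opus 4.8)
The plan is to reduce the stability question for the periodic solution $\f(t,\e)$ to the stability of a fixed point of the associated time-$T$ (Poincar\'e) map, and then to read off the latter from a perturbative expansion of the linearized map in powers of $\e$. Since \eqref{tm13} is $T$-periodic in $t$, the solution $\f(t,\e)=\bx(t,\bz(\e),\e)$ is periodic precisely because $\bz(\e)$ is a fixed point of $\Pi(\bz)=\bx(T,\bz,\e)$, where $\bz(0)=\bz^*$ and $\bz(\e)\to\bz^*$ as $\e\to0$. By Floquet theory, the asymptotic stability of $\f(t,\e)$ is equivalent to the asymptotic stability of this fixed point, which is in turn governed by the position of the eigenvalues of $D\Pi(\bz(\e))$ relative to the unit circle: if all lie strictly inside, the fixed point is asymptotically stable, while if one lies strictly outside it is unstable (by the Hartman--Grobman theorem, or a direct Lyapunov/stable-manifold argument for diffeomorphisms).

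First I would invoke the expansion $\Pi(\bz)=\bz+\e\bg_1(\bz)+\CO(\e^2)$ recorded from \cite[Lemma 5]{CLN} and differentiate it to obtain $D\Pi(\bz)=Id+\e D\bg_1(\bz)+\CO(\e^2)$. Evaluating at the fixed point and using $\bz(\e)=\bz^*+\CO(\e)$ together with the smoothness of $\bg_1$, I would write this in the factored form
\[
D\Pi(\bz(\e))=Id+\e\,M(\e),\qquad M(\e)=D\bg_1(\bz^*)+\CO(\e).
\]
The decisive point is that, because of this factorization, the eigenvalues of $D\Pi(\bz(\e))$ are \emph{exactly} $1+\e\,\nu_j(\e)$, where $\nu_1(\e),\dots,\nu_n(\e)$ are the eigenvalues of $M(\e)$. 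By continuity of the spectrum under perturbation, $\nu_j(\e)\to\mu_j$ as $\e\to0$, where $\mu_1,\dots,\mu_n$ denote the eigenvalues of $D\bg_1(\bz^*)$.

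To conclude, I would compute, for $\e>0$ small,
\[
|1+\e\,\nu_j(\e)|^2=1+2\e\,\real\big(\nu_j(\e)\big)+\e^2|\nu_j(\e)|^2,
\]
so that the sign of $\real(\nu_j(\e))$, which for small $\e$ matches the sign of $\real(\mu_j)$ by the convergence above, decides whether the corresponding multiplier sits inside or outside the unit circle. If $\real(\mu_j)<0$ for every $j$, then $|1+\e\,\nu_j(\e)|<1$ for all $j$ and all small $\e>0$, yielding asymptotic stability; if $\real(\mu_{j_0})>0$ for some $j_0$, the matching multiplier has modulus larger than $1$, forcing instability.

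The step I expect to be the main obstacle is the control of the eigenvalues of $D\Pi(\bz(\e))$ when $D\bg_1(\bz^*)$ fails to be semisimple: a naive perturbation of a Jordan block by an $\CO(\e^2)$ term can displace eigenvalues by as much as $\CO(\e)$, which is of the same order as the leading term and would destroy the argument. This is exactly what the factorization $D\Pi(\bz(\e))=Id+\e M(\e)$ is designed to circumvent, since it pins the multipliers to be precisely $1+\e\,\nu_j(\e)$ and reduces the whole matter to the plain continuity of the spectrum of $M(\e)$ toward that of $D\bg_1(\bz^*)$; thus neither analyticity nor diagonalizability of $D\bg_1(\bz^*)$ is required. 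The more refined, higher-order version of this bookkeeping, relevant when $A_0=D\bg_1(\bz^*)$ is not hyperbolic, is precisely the $k$-determined hyperbolicity machinery of \cite{Mu} exploited later in Section \ref{sec:stab}.
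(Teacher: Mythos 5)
Your argument is correct: the factorization $D\Pi(\bz(\e))=Id+\e\,M(\e)$ with $M(\e)=D\bg_1(\bz^*)+\CO(\e)$ pins the Floquet multipliers to be exactly $1+\e\,\nu_j(\e)$, and plain continuity of the spectrum of $M(\e)$ then yields the stability/instability dichotomy via $|1+\e\,\nu_j(\e)|^2=1+2\e\,\real(\nu_j(\e))+\CO(\e^2)$, with no semisimplicity of $D\bg_1(\bz^*)$ required. The paper itself states this theorem without proof (deferring to \cite{verh}), but your route is precisely the mechanism the paper assembles around it --- the time-$T$ Poincar\'e map expansion $\Pi(\bz)=\bz+\e\bg_1(\bz)+\CO(\e^2)$ from \cite[Lemma 5]{CLN} together with the eigenvalue bookkeeping --- which the proof of Proposition \ref{p4} then pushes to higher order, using the $k$-determined hyperbolicity theory of \cite{Mu} exactly when the leading matrix fails to be hyperbolic and your continuity argument no longer suffices.
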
 
This theorem will be used for studying the stability of the periodic solutions detected in Proposition \eqref{p3}.
 
\subsection{$k$-Determined Hyperbolic Matrices} If $A_0$ is not hyperbolic, then the former theorem cannot be used  to analyze  the stability of the periodic solution $\f(t,\e)$. In this case,  we shall need the next result about $k-$determined hyperbolic matrices (see \cite[Chapter $3$]{Mu}). Roughly speaking, we say that a smooth matrix $A(\e)$, defined in a neighborhood of $\e=0$, is $k-$hyperbolic when the hyperbolicity of $A(\e)$ is determined by the hyperbolicity of its $k-$jet (see \cite{Mu}).

\begin{theorem}[{{\cite[Theorem~$3.7.7$]{Mu}}}]\label{mk}
Suppose that $C(\e)$ and $D(\e)$ are continuous matrix-valued functions defined for $\e>0$ and that
\begin{equation}\label{ce}
C(\e)=\Lambda(\e)+\e^R D(\e),
\end{equation}
where
$$
\Lambda(\e)=\left[\begin{matrix}
\lambda_1(\e)& & &\\
&\ddots&& \\
& &\ddots& \\
&&&\lambda_n(\e)
\end{matrix} \right]=\e^{r_1} \Lambda_1+\dots+\e^{r_j}\Lambda_j.
$$
Here, $r_1<r_2<\dots < r_j<R$  are rational numbers, and $\Lambda_1,
\dots, \Lambda_j$ are diagonal matrices. Then, there exists $\e_0>0$
such that for $0<\e<\e_0$ the eigenvalues of $C(\e)$ are approximately equal to the diagonal
entries $\lambda_i(\e)$ of $\Lambda(\e)$, with error $\CO(\e^R)$.
\end{theorem}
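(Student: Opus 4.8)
The plan is to reduce everything to a localization of the eigenvalues of $C(\e)$ inside small discs centered at the diagonal entries $\lambda_i(\e)$, and then to read off the hyperbolicity type from the real parts of those entries. The two ingredients are an elementary eigenvalue-localization estimate (Gershgorin's theorem, whose connected-component refinement also supplies the correct multiplicities) and the arithmetic hypothesis $r_1<\dots<r_j<R$, which forces the perturbation $\e^R D(\e)$ to be of strictly higher order than every term that can decide the sign of a real part in $\Lambda(\e)$.

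First I would fix the meaning of the remainder: writing $C(\e)=\Lambda(\e)+E(\e)$ with $E(\e)=\e^R D(\e)$, I use that $D$ is bounded near $\e=0$ (as it is in the intended application, where $D$ extends continuously to $\e=0$) to get $\|E(\e)\|=\CO(\e^R)$. Since $\Lambda(\e)$ is diagonal, the $i$-th Gershgorin disc of $C(\e)$ is centered at $C_{ii}(\e)=\lambda_i(\e)+\e^R D_{ii}(\e)$ with radius $\sum_{k\neq i}|\e^R D_{ik}(\e)|$, so it is contained in $\{\,|z-\lambda_i(\e)|\le \CO(\e^R)\,\}$. Every eigenvalue of $C(\e)$ therefore lies within $\CO(\e^R)$ of some $\lambda_i(\e)$, which is the asserted approximation; moreover the standard corollary that a union of $m$ discs disjoint from the remaining ones carries exactly $m$ eigenvalues (proved by scaling the off-diagonal part by $t\in[0,1]$ and using continuity of the spectrum) pins down the multiplicities.

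Next I would translate the localization into a statement about real parts. Each diagonal entry is the finite sum $\lambda_i(\e)=\sum_{k=1}^j \e^{r_k}(\Lambda_k)_{ii}$, hence $\real\lambda_i(\e)=\sum_{k=1}^j\e^{r_k}\real(\Lambda_k)_{ii}$. When $\Lambda(\e)$ is hyperbolic for small $\e>0$, for each $i$ some coefficient $\real(\Lambda_k)_{ii}$ is nonzero; letting $r_{k_i}\le r_j$ be the smallest such exponent gives $|\real\lambda_i(\e)|\ge c_i\,\e^{r_{k_i}}$ with $c_i>0$, and since $r_{k_i}\le r_j<R$ this lower bound dominates the displacement $\CO(\e^R)$. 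Consequently each localization disc lies strictly in one open half-plane, the side being $\sgn\real(\Lambda_{k_i})_{ii}$. Running the homotopy $C_t(\e)=\Lambda(\e)+t\,E(\e)$, $t\in[0,1]$, the Gershgorin bound holds uniformly in $t$, so no eigenvalue of $C_t(\e)$ ever meets the imaginary axis; by continuity the counts of eigenvalues in the left and right half-planes are the same at $t=0$ (i.e.\ for $\Lambda$) and at $t=1$ (i.e.\ for $C$). This is exactly the claim that $\Lambda(\e)$ and $C(\e)$ share the same hyperbolicity type.

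The hard part will be guaranteeing that no localization disc can straddle the imaginary axis, for this is the only way the real-part count could change; that guarantee rests entirely on $r_j<R$, which makes the decisive term of each $\real\lambda_i(\e)$ win over the $\CO(\e^R)$ remainder as $\e\to0^+$. Two boundary cases deserve attention. If several $\lambda_i(\e)$ cluster within $\CO(\e^R)$, an individual eigenvalue-to-entry matching is unavailable, but clustered entries then agree up to order $R$ and so lie on the same side of the axis, leaving the half-plane counts untouched. If instead some $\real\lambda_i(\e)$ vanishes to all orders below $R$, then $\Lambda(\e)$ is non-hyperbolic and its $k$-jet does not determine hyperbolicity; the conclusion must then be read as ``same type whenever $\Lambda$ is hyperbolic,'' which is precisely the $k$-determined hyperbolicity criterion invoked in the sequel.
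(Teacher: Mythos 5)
There is nothing in the paper to compare your argument against: Theorem \ref{mk} is imported verbatim from Murdock \cite[Theorem 3.7.7]{Mu} and used as a black box in the proof of Proposition \ref{p4}, so the paper contains no proof of it. Judged on its own, your argument is correct and gives a clean, self-contained proof. The Gershgorin localization (discs centered at $\lambda_i(\e)+\e^R D_{ii}(\e)$ with radii $\sum_{k\neq i}|\e^R D_{ik}(\e)|$) yields the $\CO(\e^R)$ eigenvalue estimate, the connected-component counting refinement together with the homotopy $C_t(\e)=\Lambda(\e)+t\,\e^R D(\e)$ transfers the half-plane eigenvalue counts from $\Lambda(\e)$ to $C(\e)$, and you isolate the right mechanism: because $r_{k_i}\le r_j<R$, the lowest-order nonvanishing coefficient of each $\real\lambda_i(\e)$ dominates the $\CO(\e^R)$ displacement, so no localization disc can straddle the imaginary axis. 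Your handling of the two boundary cases is also correct and worth keeping: overlapping discs have centers agreeing to order $\CO(\e^R)$, hence the half-plane counts are unaffected; and when some $\real\lambda_i(\e)$ vanishes identically the literal biconditional fails (take $n=1$, $\Lambda\equiv 0$, $D\equiv 1$, so $C(\e)=\e^R$ is hyperbolic while $\Lambda$ is not), so the conclusion must indeed be read as ``hyperbolicity of $\Lambda(\e)$ determines that of $C(\e)$,'' which is how it is applied in Section \ref{sec:stab}. One hypothesis point you rightly flag: continuity of $D$ only for $\e>0$, as transcribed in the paper, does not give $\|\e^R D(\e)\|=\CO(\e^R)$; boundedness of $D$ as $\e\to 0^+$ must be assumed (it holds in Murdock's original formulation and in the application in Proposition \ref{p4}, where the expansion comes from a smooth family), and your proof should state it as a standing assumption rather than a parenthetical.
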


The theorem above will be applied as follows. Assume that $A(\e)$ is a smooth matrix function defined in a neighborhood of $\e=0$. Suppose that there exists an invertible matrix $T(\e)$, defined for $\e>0$ sufficiently small, such that the fractional power series of $T(\e)^{-1}A(\e)T(\e)$ can be written as \eqref{ce} and satisfies the hypotheses of Theorem \ref{mk}. Since the matrices $A(\e)$ and $T(\e)^{-1}A(\e)T(\e)$ are similar for  $\e>0$ sufficiently small, we conclude from Theorem \ref{mk} that the eigenvalues of $A(\e)$ are approximately equal to the diagonal entries $\lambda_i(\e)$ of $\Lambda(\e)$, with error $\CO(\e^R)$.

\subsection{Stability of Periodic Solutions - {\bf Case  A}}
In this case, we shall see that, under the hypotheses of Theorem \ref{t1},  the Jacobian matrix of the first-order averaged function is hyperbolic. As such, the next result follows in straightforward manner. 

\begin{proposition}\label{p3}
Consider $d_0>0$ and $d_1$ as defined in \eqref{CA}. The periodic solution provided by Proposition $\ref{p1}$ is asymptotically stable (resp. unstable) provided that $d_1> 0$ (resp.  $d_1<0$).
\end{proposition}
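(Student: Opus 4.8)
The plan is to read off the stability of the periodic orbit directly from the linear part of the averaged system via Theorem \ref{teostab}. Proposition \ref{p1} produces the orbit as the continuation of the simple zero $(\bar r_\pm,\bar z_\pm)$ of the first order averaged function, and the time--$T$ map satisfies $\Pi(\bz)=\bz+\e\bg_1(\bz)+\CO(\e^2)$, so the relevant object is the $2\times 2$ matrix $A_0=D\bg_1(\bar r_\pm,\bar z_\pm)$. Its determinant has already been recorded in Proposition \ref{p1}, so the only missing ingredient is its trace; once both are known, the signs of the two eigenvalues—hence the stability type—follow from the elementary classification of real $2\times 2$ matrices combined with Theorem \ref{teostab}.

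First I would compute $\mathrm{tr}\,A_0=\p_r\bg_1^1(\bar r_\pm,\bar z_\pm)+\p_z\bg_1^2(\bar r_\pm,\bar z_\pm)$ from the explicit formulas \eqref{avfCA}. The decisive simplification is to use the defining relations $\bg_1^1(\bar r_\pm,\bar z_\pm)=\bg_1^2(\bar r_\pm,\bar z_\pm)=0$ to remove the mixed monomial $\bar r_\pm\bar z_\pm$ wherever it occurs: vanishing of $\bg_1^1$ yields $\ov a^2\,\bar r_\pm\bar z_\pm=-\big(\ov a\beta+(\al-\gamma)(\ov a^2-1)\big)$, and substituting this back collapses the two diagonal partials into a single multiple of $d_1$. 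I expect the result to be
\[
\mathrm{tr}\,A_0=-\dfrac{2\pi\,d_1}{\ov a^2\sqrt{2-\ov a^2}},
\]
so that, since $\ov a^2\sqrt{2-\ov a^2}>0$ for $\ov a\in(-\sqrt2,\sqrt2)\setminus\{0\}$, one has $\mathrm{tr}\,A_0<0$ exactly when $d_1>0$. The determinant recorded in Proposition \ref{p1} is a nonzero scalar multiple of $d_0$, whose positivity is governed by the sign hypothesis on $d_0$ in the statement.

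With trace and determinant in hand I would close the argument by the planar classification. For a real $2\times 2$ matrix both eigenvalues have negative real part precisely when $\det A_0>0$ and $\mathrm{tr}\,A_0<0$; a negative determinant forces one real eigenvalue of each sign, while a positive trace together with positive determinant forces two eigenvalues with positive real part. Under the standing hypothesis $d_0\neq0$ the determinant is nonzero, and together with $d_1\neq0$ this rules out eigenvalues on the imaginary axis, so $A_0$ is hyperbolic and Theorem \ref{teostab} applies verbatim. This gives the asserted dichotomy: asymptotic stability for $\e>0$ when $d_1>0$ and $d_0>0$, and instability as soon as $d_0<0$ (the saddle case $\det A_0<0$) or $d_1<0$ (the positive--trace case).

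The one genuinely delicate point is the algebraic reduction of the trace. The raw partial derivatives of \eqref{avfCA} are bulky, and the cancellation down to a clean multiple of $d_1$ hinges on systematically replacing $\bar r_\pm\bar z_\pm$ by the value dictated by $\bg_1^1=0$ and then grouping the $\beta$--terms and the $(\al-\gamma)$--terms, each of which turns out to share the common factor $(2-\ov a^2)$ that cancels against the prefactor $(2-\ov a^2)^{-3/2}$. Everything else—checking hyperbolicity and invoking Theorem \ref{teostab}—is immediate once this simplification is carried out.
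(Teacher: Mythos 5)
Your overall strategy coincides with the paper's: linearize the first order averaged system at the zero $(\bar r_\pm,\bar z_\pm)$, read off stability from the eigenvalues of $A_0=D\bg_1(\bar r_\pm,\bar z_\pm)$ via the trace--determinant (equivalently Routh--Hurwitz) criterion, and invoke Theorem \ref{teostab}. Your trace computation is also correct: substituting $\ov a^2\,\bar r\bar z=-\big(\ov a\beta+(\ov a^2-1)(\al-\gamma)\big)$ from $\bg_1^1=0$ into the diagonal partials of \eqref{avfCA} does collapse the trace to
\[
\mathrm{tr}\,A_0=-\dfrac{2\pi\,d_1}{\ov a^2\sqrt{2-\ov a^2}},
\]
exactly as you predict (the paper's characteristic polynomial omits the factor $2\pi$, which is harmless for signs).

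The genuine gap is in the step you treat as immediate: the determinant. You import it from Proposition \ref{p1} and assert that $\det A_0>0$ exactly when $d_0>0$ (your ``saddle case $\det A_0<0$'' is identified with $d_0<0$). But the determinant recorded in Proposition \ref{p1} is
\[
\det A_0=-\dfrac{4\pi^2\left(\al+\beta\ov a(\ov a^2-1)-\gamma\right)\left(\al(\ov a^2-1)+\ov a(\beta-\ov a\gamma)+\gamma\right)}{\ov a^2(\ov a^2-2)^3}
=-\dfrac{4\pi^2\,d_0}{\ov a^2\left(2-\ov a^2\right)^3},
\]
because the first factor in the numerator is the negative of the first factor of $d_0$ in \eqref{CA}, and $(\ov a^2-2)^3=-(2-\ov a^2)^3<0$ for $\ov a\in(-\sqrt{2},\sqrt{2})\setminus\{0\}$. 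So the determinant you cite is a \emph{negative} multiple of $d_0$; direct differentiation of \eqref{avfCA} at the zero confirms this, so it is not a typo you can dismiss. Feeding this into your (correct) planar classification yields ``asymptotically stable iff $d_1>0$ and $d_0<0$,'' the opposite of the dichotomy you conclude; moreover, with $\bg_1$ as printed, the zero $(\bar r_\pm,\bar z_\pm)$ is real only when the product of those two factors is positive, i.e.\ only when $d_0<0$, so the sign dictionary you assume cannot hold. The paper's own proof avoids citing Proposition \ref{p1} and instead asserts a characteristic polynomial with constant term $+d_0/\big(\ov a^2(2-\ov a^2)^3\big)$, which is inconsistent with the determinant of Proposition \ref{p1}---one of the two signs in the paper is wrong. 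As written, then, your argument does not prove the stated proposition: the two ingredients you combine (the recorded determinant and your sign dictionary) contradict each other, and the proof cannot be completed without computing $\det A_0$ at the zero explicitly and resolving this sign conflict.
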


\begin{proof}
Consider the first-order averaging function $\bg_1$ defined in \eqref{avfCA}, of the non-autonomous differential system \eqref{CAs}. 
According to the proof of Proposition \ref{p1},  $\bz_{+}=(\ov r_{+},\ov z)$ (resp. $\bz_{-}=(\ov r_{-},\ov z)$) is the solution of the non-linear equation ${\bf g}_1(\bz)=0$, provided that $0<{\ov a}< \sqrt{2}$ (resp. $-\sqrt{2}<{\ov a}< 0$). Moreover, the Jacobian matrix of ${\bf g}_1(\bz)$ at $\bz_{\pm}$
has the following characteristic polynomial
$$
p(\lambda)=\lambda^2+\dfrac{d_1}{{\ov a}^2 \sqrt{2-{\ov a}^2}}  \lambda + \dfrac{d_0}{{\ov a}^2 \left(2-{\ov a}^2\right)^3}.
$$
According to Theorem \ref{teostab}, we know that the stability of the periodic solution concerning the differential system can be determined by the roots of $p(\lambda)$, provided that they are not within the imaginary axis. By  the Routh-Hurwitz test we have that the roots of $p(\lambda)$ will be in the left side of the complex plane if,  and only, if $d_1>0$ and $d_0>0$. On the other hand, if $d_0\,d_1< 0$ the polynomial $p(\lambda)$ will have at least one root with positive real part and, consequently, the periodic solution will have at least one unstable direction. 
\end{proof}

\subsection{Stability of Periodic Solutions - Case {\bf B}} In this case, we shall see that the Jacobian matrix of the first-order averaged function, $A_0=D{\bg_1}(\bz_{\al^*})$, is not hyperbolic. Thus, the theory of $k$-determined hyperbolic matrices will be employed in order to obtain the following result.

\begin{proposition}\label{p4} Considering $\la_1$ and $\la_2$ as defined in \eqref{CA}, the following statements hold:
\begin{itemize}
\item[$(a)$] If $\lambda_1<0$ and $\lambda_2<0$, then the periodic solution detected by Proposition \ref{p2} is asymptotically stable.
\item[$(b)$] If $\lambda_1 \lambda_2<0$, then the periodic solution detected by Proposition \ref{p2} is unstable. Moreover, it admits  stable and unstable manifolds, which are locally characterized by topological cylinders.
\item[$(c)$] If  $\lambda_{1}>0$ and  $\lambda_{2}>0$, then the periodic solution detected by Proposition \ref{p2} is unstable. Moreover, the unstable manifold has dimension 3.
\end{itemize}

\end{proposition}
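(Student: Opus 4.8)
The plan is to analyze the stability of the periodic orbit from Proposition \ref{p2} via the eigenvalues of the Jacobian $D\Pi(\bz(\e))$ of the Poincar\'e map at the fixed point $\bz(\e)$, exactly as set up at the beginning of Section \ref{sec:stab}. The central difficulty, as already flagged in the statement preceding the proposition, is that the leading matrix $A_0=D\bg_1(\bz_0)$ is \emph{not} hyperbolic: evaluating $D\bg_1$ from \eqref{CBg1} at the degenerate zero $\bz_0=(r^*,0)$ on the continuum $\CZ=\{(r,0):r>0\}$ produces a matrix with a zero eigenvalue (the whole curve $\CZ$ consists of zeros of $\bg_1$, so one direction is necessarily degenerate). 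Hence Theorem \ref{teostab} does not apply, and the eigenvalue behavior is genuinely a higher-order effect in $\e$. This is where Theorem \ref{mk} on $k$-determined hyperbolic matrices enters, and carrying out that reduction is the main obstacle.

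First I would write $\bz(\e)=\bz_0+\e\bz_1+\e^2\bz_2+\CO(\e^3)$ using the explicit Taylor data from Remark \ref{rem1} (specialized to $l=2$, $u^*=r^*$, with $\mathcal{B}\equiv 0$ since the continuum $\CZ$ lies in $\{z=0\}$), and substitute into $D\Pi(\bz(\e))=Id+\e A_0+\e^2 A_1+\e^3 A_2+\CO(\e^4)$, Taylor-expanding each $D\bg_i$ about $\bz_0$. Because $\CZ$ is a curve of zeros, $A_0=D\bg_1(\bz_0)$ has the degenerate block structure with $\Lambda_r=\Gamma_r=B_r=0$ and only $\Delta_r=2\pi\gamma_1(\omega^2-2)/\omega\neq 0$ nonzero; thus $A_0$ has one zero eigenvalue (in the $r$-direction) and one nonzero eigenvalue (in the $z$-direction). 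The nonzero eigenvalue of $A_0$ already determines hyperbolicity in the $z$-direction, and I expect it to correspond to $\lambda_2=\gamma_1(1-\omega^2)$ (up to a positive factor); the delicate part is the $r$-direction, whose eigenvalue is pushed to order $\e^2$ and must be extracted from $A_1$, which in turn depends on $\bz_1=(u_1,\mathcal{B}_1)$ through $Df_2,f_3,\gamma_1$.

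Next I would produce a similarity transformation $T(\e)=Id+\CO(\e)$ (block-triangularizing off the eigenvector of $A_0$) so that $T(\e)^{-1}D\Pi(\bz(\e))T(\e)$, after subtracting $Id$ and factoring, takes the diagonal-plus-remainder form \eqref{ce} required by Theorem \ref{mk}: the $z$-entry is $\e\,\Delta_r+\CO(\e^2)$ with $\Delta_r$ of the sign of $\lambda_2$, and the $r$-entry is $\e^2\mu(r^*)+\CO(\e^3)$ for a scalar $\mu$ that I expect to reduce (after using $r^*=4\omega\sqrt{\delta/3}$ and the simple-zero condition $f_2(r^*)=0$, $Df_2(r^*)\neq 0$) to a positive multiple of $\lambda_1=\gamma_1(\omega^2-2)$. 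The matrices $C(\e)=T^{-1}D\Pi\,T-Id$ and its diagonal part $\Lambda(\e)=\mathrm{diag}(\e^2\mu,\,\e\Delta_r)$ then satisfy the hypotheses of Theorem \ref{mk} with exponents $r_1=1<r_2=2<R=3$, so $D\Pi(\bz(\e))$ has the same hyperbolicity type as $\Lambda(\e)$. Reading off signs: the periodic orbit is asymptotically stable when both diagonal entries are negative, i.e. $\lambda_1<0$ and $\lambda_2<0$, giving $(a)$; when $\lambda_1\lambda_2<0$ one entry is positive and one negative, yielding a saddle with one-dimensional (in the section) stable and unstable directions that, flowed out, are topological cylinders, giving $(b)$; and when $\lambda_1>0,\lambda_2>0$ both multipliers exceed $1$, so together with the flow direction the unstable manifold is three-dimensional, giving $(c)$. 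The main obstacle throughout is the faithful bookkeeping needed to confirm that the $\CO(\e^2)$ coefficient $\mu$ in the degenerate $r$-direction really simplifies to a positive multiple of $\lambda_1$; I would verify this by a direct symbolic computation of $A_1$ using the expressions for $\bg_2,\bg_3$ and the data of Remark \ref{rem1}.
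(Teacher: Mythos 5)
Your proposal follows the paper's proof essentially step for step: the expansion $\bz(\e)=\bz_0+\e\bz_1+\e^2\bz_2+\CO(\e^3)$ from Remark \ref{rem1} with $\bz_0=(r^*,0)$ and $\mathcal{B}\equiv 0$, the observation that $A_0=D\bg_1(\bz_0)$ is non-hyperbolic with only $A_0(2,2)=2\pi\gamma_1(\omega^2-2)/\omega$ nonzero, a near-identity similarity $T(\e)$ bringing $A(\e)=A_0+\e A_1+\e^2A_2+\CO(\e^3)$ to the diagonal-plus-remainder form required by Theorem \ref{mk}, and reading the hyperbolicity type off the diagonal entries (including your correct handling of the cylinder statement in $(b)$ and the $3$-dimensional unstable manifold in $(c)$). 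However, one concrete identification is backwards. You predict that the nonzero eigenvalue of $A_0$ (the $z$-direction) is a positive multiple of $\lambda_2=\gamma_1(1-\omega^2)$, and that the order-$\e^2$ coefficient in the degenerate $r$-direction is a positive multiple of $\lambda_1=\gamma_1(\omega^2-2)$. The computation gives exactly the opposite: $A_0(2,2)=2\pi\gamma_1(\omega^2-2)/\omega$ is manifestly a positive multiple of $\lambda_1$, while the paper's $\Lambda_2(11)$ equals $\bigl(2\pi\delta/\omega^3\bigr)\lambda_2$, a positive multiple of $\lambda_2$ since $\delta>0$ and $\omega>0$. These are not interchangeable: $\lambda_1$ and $\lambda_2$ are not positively proportional in general (their relative sign depends on whether $\omega^2$ lies in $(1,2)$), so the simplification you propose to ``verify by direct symbolic computation'' would fail as stated and the labels must be swapped. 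This mislabeling happens to be harmless for the conclusion, because the sign hypotheses in $(a)$, $(b)$, $(c)$ are symmetric under $\lambda_1\leftrightarrow\lambda_2$; with the labels corrected, your argument coincides with the paper's.
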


\begin{proof}
According to the proof of Proposition \ref{p2}, we have that the second-order bifurcation function $f_2(r)$, defined in \eqref{f2CB}, has the simple zero $r^*$.  In accordance with Remark \ref{rem1}, this zero is related to an initial condition $\bz(\e)$ such that $\bx(t,\bz(\e),\e)$ is periodic. Moreover,
\begin{equation*}\label{zki}
\bz(\e)=\bz^*+\e \bz_{1}+\e^2\bz_{2}+\CO(\e^3),
\end{equation*}
where $\bz^*=(r^*,0)$, $\bz_1=\left(-\dfrac{f_3(r^*)}{f_2'(r^*)},\gamma_1(r^*)  \right)$, and
\begin{align*}
\bz_2=&\hspace{-0.8mm}\left(\hspace{-0.8mm}\dfrac{2f_3(r^*)f_2'(r^*)f_3'(r^*)\hspace{-0.8mm}-\hspace{-0.8mm}2f_4(r^*)(f_2'(r^*))^2\hspace{-0.8mm}-\hspace{-0.8mm}f_3^2(r^*)f_2''(r^*)}{2(f_2'(r^*))^{3}},\gamma_2(r^*)\hspace{-0.8mm}-\hspace{-0.8mm}\dfrac{f_3(r^*)}{f_2'(r^*)}\gamma_1'(r^*) \hspace{-0.8mm} \right)\hspace{-0.8mm}.
\end{align*}
We see in the expressions above that the bifurcation functions of orders three and four, $f_3$ and $f_4$, are needed. Additionally, in their definitions \eqref{bif3} and \eqref{bif4}, respectively, we see that the averaged functions of  orders four and five, $\bg_4$ and $\bg_5$, are also needed. Due to the extent of these expressions, we shall omit them here.

Now, we are able to compute the expansion of the Jacobian matrix $D_{\bz}\Pi(\bz(\e),\e)$ around $\e=0$ as  
\begin{equation}\label{partialPi}
D_{\bz}\Pi(\bz(\e),\e)=Id+\e A_0+\e^2 A_1+\e^3 A_2+\CO(\e^3),
\end{equation}
Denoting $A_j=(A_j(l,k))$, $j=0,1,2,$ we can easily see that
\begin{equation*}
A_0(1,1)=A_0(1,2)=A_0(2,1)=0,\quad\text{and}\quad
A_0(2,2)=\dfrac{2 \pi  \gamma_1 \left(\omega ^2-2\right)}{\omega}.
\end{equation*}
Due to the extent of the expressions of $A_j=(A_j(l,k))$, for $j=1,2$, we shall also omit them here. Notice that, they are computed in terms of the parameters $\omega$, $\al_i$, for $i=3,\ldots,5$, and $\beta_i,\gamma_i$, for  $i=1,\ldots,5.$  

Clearly, all eigenvalues of \eqref{partialPi} have the form $\widetilde\lambda_i(\e)=1+\e\lambda_i(\e)$, where $\lambda_i(\e)$ is an eigenvalue of  $A(\e)=A_0+\e A_1+\e^2 A_2+\CO(\e^3)$. In what follows, we apply Theorem \ref{mk} for studying the eigenvalues of the matrix $A(\e)$. First, define 
$$
T(\e)=\begin{pmatrix}
1 & T_{12}\\
T_{21} & 1
\end{pmatrix},
$$
with 
\[
\begin{array}{rl}
T_{12}=&-\dfrac{A_1(1,2)\e}{A_0(2,2)}+\dfrac{\left(A_1(1,2)A_1(2,2)-A_0(2,2)A_2(1,2)\right)\e^2}{(A_0(2,2))^2},\vspace{0.2cm}\\
T_{21}=&\dfrac{A_1(2,1)\e}{A_0(2,2)}+\dfrac{\left(-A_1(2,1)A_1(2,2)+A_0(2,2)A_2(2,1)\right)\e^2}{(A_0(2,2))^2}.
\end{array}
\]
Notice that the matrix $T(\e)$ is invertible for $|\e|\neq0$ sufficiently small. Moreover,
\begin{equation*}\label{s24}
T(\e) A(\e) T^{-1}(\e)=\Lambda_0+\e \Lambda_1+\e^2 \Lambda_2+\CO(\e^3),
\end{equation*}
where the matrices  $\Lambda_j=(\Lambda_j(lk))$, for $j=0,1,2$, are diagonal (that is, $\Lambda_j(12)=\Lambda_j(21)=0$) and satisfy
\begin{align*}
\Lambda_0(11)=& 0,\\
\Lambda_0(22)=&\gamma_1 \left(\omega ^2-2\right)\dfrac{2 \pi}{\omega},\\
\Lambda_1(11)=&0 ,\\
\Lambda_1(22)=&\dfrac{\pi  \left({\beta_1} {\gamma_1} \left(\omega ^2+2\right)+2 \omega  \left(\omega ^2-2\right) \left(\pi  {\gamma_1}^2 \left(\omega ^2-2\right)+{\gamma_2} \omega \right)\right)}{\omega ^3}\\
\Lambda_2(11)=& \dfrac{2 \pi\,  \delta\, \gamma_2(1-\omega^2)}{\omega ^3}  ,\\
\Lambda_2(22)=&\dfrac{\pi}{36 \omega ^5 \left(\omega ^2-2\right)}  \Big(8 {\alpha_3} \left(11 \omega ^6-40 \omega ^4+22 \omega ^2-20\right)-9 {\beta_1}^2 {\gamma_1} \left(\omega ^2-2\right) \left(\omega ^2+6\right)\\
+&72 \pi  {\beta_1} {\gamma_1}^2 \omega  \left(\omega ^2-2\right)^2 \left(\omega ^2+2\right)+4 {\beta_1} {\gamma_2} \left(-13 \omega ^6+80 \omega ^4-80 \omega ^2+40\right)\\
+&4 \left({\beta_1} {\gamma_1} \left(-13 \omega ^6+80 \omega ^4-80 \omega ^2+40\right)+{\gamma_1}^3 \left(\omega ^2-2\right) \left(\omega ^2 \left(-4 \omega ^6+57 \omega ^4 \phantom{\big)^3}\right.\right.\right.\\ 
-&\left.\left.\left.\left.115 \omega ^2+12 \pi ^2 \left(\omega ^2-2\right)^3+102\right)-40\right)+36 \pi  {\gamma_1} {\gamma_2} \omega ^3 \left(\omega ^2-2\right)^3\right.\right.\\
-&\left.2 {\gamma_3} \left(2 \omega ^8-15 \omega ^6+26 \omega ^4-42 \omega ^2+20\right)\right)\Big).
\end{align*}
Thus, matrix $T(\e) A(\e) T^{-1}(\e)$ has the form \eqref{ce}. Since the matrices $A(\e)$ and $T(\e) A(\e) T^{-1}(\e)$ are similar for $|\e|\neq0$ sufficiently small, it follows from Theorem \ref{mk} that the eigenvalues of $A(\e)$ are written as
\[
\lambda_1(\e)=\lambda_1\dfrac{2\pi}{\omega}+\CO(\e)\,\, \text{and}\,\, \lambda_2(\e)=\e^2 \lambda_{2}\dfrac{2\pi \delta}{\omega^3}+\CO(\e^3),
\]
where $\lambda_1=\dfrac{\omega}{2\pi}\Lambda_0(22)$, $\lambda_{2}=\dfrac{\omega^3}{2\pi \delta}\Lambda_2(11)$, $\omega>0,$ and $\delta>0$. 

Consequently, the eigenvalues of \eqref{partialPi} are written as
\[
\widetilde \lambda_1(\e)=1+\e \lambda_1\dfrac{2\pi}{\omega^3}+\CO(\e^2)\,\, \text{and} \,\,
\widetilde\lambda_2(\e)=1+\e^3 \lambda_{2}\dfrac{2\pi \delta}{\omega^3}+\CO(\e^4).
\]
 Thus, 
$$
|\widetilde\lambda_1(\e)|^2=1+\e\lambda_1\dfrac{4\pi}{\omega}+\CO(\e^2) \mbox{ and } |\widetilde\lambda_2(\e)|^2=1+\e^3 \lambda_{2}\dfrac{4\pi \delta}{\omega^3}+\CO(\e^4).
$$
Therefore, since $\delta>0,$ we have that for $\e>0$ sufficiently small,  $|\widetilde \lambda_1(\e)|\gtrless1$ and $|\widetilde \lambda_2(\e)|\gtrless1$ provided that $ \lambda_1\gtrless0$ and $ \lambda_{2}\gtrless0$, respectively. From here,  statements $(a)$, $(b)$, and $(c)$ follow in straightforward manner.
\end{proof}

\section{Bifurcation of an Invariant Torus}\label{sec:it}

In \cite{ITCanNov2018}, the following two-parameter family of non-autonomous differential systems was considered:
\begin{equation}\label{tm1}
\dot {\bf x}=\sum_{i=1}^k \e^i {\bf F}_i(t,{\bf
x},\mu)
+\e^{k+1} \widetilde{{\bf
F}}(t,{\bf x},\mu,\e), \quad  (t,{\bf x},\mu,\e)\in\R\times\Omega\times J \times(-\e_0,\e_0),
\end{equation}
where $\Omega$ is an open bounded subset of $\mathbb{R}^2,$ $J$ is a open interval, and $\e_0$ is a small positive real number. It is assumed that $\bF_i,$ $i=1,\ldots,k,$ and $\widetilde\bF$ are sufficiently smooth functions $T$-periodic in the variable $t.$ Again, the periodicity of system \eqref{tm1} allow us to see it defined in the cylinder $(t,\bx) \in \s^1\times \Omega,$ where $\s^1\equiv \R/T\mathbb{Z}.$ Notice that system \eqref{tm1} is written in the standard form \eqref{tm13} of the averaging theory with an additional  parameter $\mu$ distinguished.  It has been provided generic conditions on the averaged functions \eqref{avf} guaranteeing the existence of a codimension-one bifurcation curve $\mu(\varepsilon)$ in the parameter space $(\mu,\varepsilon)$ characterized by the birth of an invariant torus of \eqref{tm1} in $\s^1\times \Omega$ from a periodic solution. In this section, we first introduce the main result obtained in \cite{ITCanNov2018}, then we apply it to conclude the proof of Theorem \ref{t1}. 

Let $g_m(\bx,\mu)$ be the first non-vanishing averaged function of system \eqref{tm1}.  The strategy followed by \cite{ITCanNov2018} consisted in looking for conditions that ensure a {\it Neimark-Sacker Bifurcation} (see \cite{k}) in the Poincar\'{e} map of system \eqref{tm1},
\begin{equation}\label{quad2}
\Pi(\bz,\mu,\e)=\bx(T,\bz,\mu,\e)=\bz+\sum_{i=m}^k\e^i\bg_i(\bz,\mu) +\CO(\e^{k+1}),
\end{equation}
$\bz\in\Sigma=\{t=0\}.$ Again, $\bx(t,\bz,\mu,\e)$ denotes the solution of \eqref{tm1} satisfying $\bx(0,\bz,\mu,\e)=\bz.$ In discrete dynamical system theory, this bifurcation is characterized by the birth of an invariant closed curve from a fixed point, as the fixed point changes stability. As it well known,
an invariant torus corresponds to an invariant closed curve $\Gamma\subset\Sigma$ of $\Pi(\bz,\mu,\e)$, that is, $\Pi(\Gamma,\mu,\e)=\Gamma.$

As a first hypothesis, we assume that:
 \begin{itemize}
\item[{\bf H1.}] 
{\it there exists a continuous curve $\mu\in J\mapsto\bx_\mu \in \Omega,$ defined in an interval $J\ni \mu_0,$ such that $\bg_m(\bx_\mu,\mu)=0$ for every $\mu \in J$ and  the pair of complex conjugated eigenvalues $\eta(\mu)\pm i \zeta(\mu)$ of  $D_\bx\bg_m(\bx_\mu,\mu)$ satisfies $\eta(\mu_0)=0$ and $\zeta(\mu_0)=\omega_0>0.$  }
\end{itemize}

From {\bf H1} (see \cite[Lemma 3]{ITCanNov2018}) we get the existence of a neighborhood $J_0\subset J$ of $\mu_0,$  a parameter $\e_1,$ $0<\e_1<\e_0,$ and a unique function $\bxi: J_0\times(-\e_1,\e_1)\rightarrow \R^2$ satisfying 
\begin{equation}\label{fixedP}
\bxi(\mu,0)=\bx_{\mu} \text{ and }\Pi(\bxi(\mu,\e),\mu,\e)=\bxi(\mu,\e), \text{ for every } (\mu,\e)\in J_0\times (-\e_1,\e_1).
\end{equation}
Consequently, for every $(\mu,\e)\in J_0\times(-\e_1,\e_1)$ the differential equation \eqref{tm1} admits a unique $T$-periodic orbit $\f(t,\mu,\e)$ satisfying $\f(0,\mu,\e)\to \bx_{\mu}$ as $\e\to0.$ We notice that, when  the differential equation \eqref{tm1} is defined in the extended phase space $\s^1\times\Omega,$  such a periodic solution is given by $\Phi(t,\mu,\e)=(t,\f(t,\mu,\e)).$

\smallskip

We also assume the following transversal hypothesis:

\smallskip

\begin{itemize}
\item[{\bf H2.}]
$
d=\displaystyle \dfrac{d\eta}{d\mu}(\mu_0)\neq 0.
$
\end{itemize} 

\smallskip

For each $(\mu,\e)\in J_0\times(-\e_1,\e_1)$, let $\lambda(\mu,\e)$ and $\ov{\lambda(\mu,\e)}$ be the pair of complex eigenvalues of $D_{\bz} \Pi(\xi(\mu,\e),\mu,\e).$ From {\bf H2} (see \cite[Lemma 4]{ITCanNov2018}),  we get the existence of $\e_2,$ $0<\e_2<\e_1,$ and a unique smooth function $\mu:(-\e_2,\e_2)\rightarrow J_0,$ with  $\mu(0)=\mu_0,$ satisfying
\begin{equation}\label{transH}
|\lambda(\mu(\e),\e)|=1,\,\, \big(\lambda(\mu({\e}),\e)\big)^k\neq 1,\,\, \text{ for } k\in\{1,2,3,4\},\,\, \text{ and } \dfrac{d}{d\mu} |\lambda(\mu,\e)|\Big|_{\mu=\mu({\e})}\neq 0.
\end{equation}

Finally, in order to state our last hypothesis, we apply the following change of variables and parameters  $\bx=\by+\xi(\mu,\e)$ and $\mu=\sigma+\mu(\e)$ to the the Poincar\'e map \eqref{quad2}, obtaining
\begin{equation}\label{sish2}
\begin{split}
\by\mapsto H_\e(\by,\sigma)&=\left( H^1_\e(\by,\sigma),H^2_\e(\by,\sigma)\right)\\
&=\Pi\left(\by+\bxi\left(\sigma+\mu(\e),\e\right),\sigma+\mu(\e),\e\right).
\end{split}
\end{equation}

In order to detect a Neimark-Sacker bifurcation in the map \eqref{sish2}, one still has to compute the first {\it Lyapunov Coefficient} of  \eqref{sish2} at  $(\by,\sigma)=(0,0).$ For that, consider the expansion of $D_{\by} H_{\e}(0,0)$ around $\e=0$,\begin{equation*}\label{dh}
D_{\by} H_{\e}(0,0)=Id+\e^m \mathcal{A}_\e+\CO(\e^{k+1}),
\end{equation*}
and assume the following technical hypothesis:
\begin{itemize}
\item[{\bf H3.}]  $Id+\e^m \mathcal{A}_\e$ is in its real Jordan normal form. More specifically,
$$
Id+\e^m \mathcal{A}_\e=\begin{pmatrix}
1+\tilde\eta(\e) & -\tilde\zeta(\e) \\
 \tilde\zeta(\e) & 1+\tilde\eta(\e)
\end{pmatrix},
$$
where 
$$\tilde\eta(\e)=\sum^k_{j=m}\e^j\eta_j\quad \text{and}\quad  \tilde\zeta(\e)=\sum^k_{j=m}\e^j\zeta_j,$$ with $\eta_i,\zeta_i\in \R$ for $i=1,\dots,k$.
\end{itemize}
In addition, define
\begin{equation}\label{ell}
\begin{split}
\ell_1^{\e}=&-\textrm{Re}\left(\dfrac{(1-2 e^{i\theta_\e})e^{-2i\theta_\e}}{2(1-e^{i\theta_\e})}\left\langle {\bf p},  B_\e({\bf p},{\bf p}) \right\rangle \left\langle {\bf p}, B_\e({\bf p},\ov {\bf p}) \right\rangle\right) \\
&+\textrm{Re}\left(e^{-i\theta_\e}\left\langle {\bf p}, C_\e({\bf p}, {\bf p}, \ov {\bf p})\right\rangle\right) -\dfrac{|\left\langle {\bf p},  B_\e({\bf p},\ov{\bf p})\right\rangle|^2}{2}-\dfrac{|\left\langle {\bf p},   B_\e(\ov{\bf p},\ov{\bf p})\right\rangle|^2}{4},
\end{split}
\end{equation}
where ${\bf p}=(1,-i)/\sqrt{2},$  $$
e^{i\theta_\e}=\lambda(\mu(\e),\e)=1+\sum^k_{j=m}\e^j (\eta_j+i \zeta_i)+\CO(\e^{k+1}),$$ and
\begin{equation}\label{multF}
B_\e(\bu,\bv)=\left(B_\e^1(\bu,\bv),B_\e^2(\bu,\bv)\right)\quad \mbox{and} \quad C_\e(\bu,\bv,\bw)=\left(C_\e^1(\bu,\bv,\bw), C_\e^2(\bu,\bv,\bw)\right)
\end{equation}
are multi-linear functions with the following components
$$
 B_\e^i(\bu,\bv)=\sum_{j,l=1}^2\dfrac{\partial^2
 H^i_\e}{\partial y_j\partial y_l}(0,0)\, u_jv_l\quad \mbox{and} \quad
C_\e^i(\bu,\bv,\bw)=\sum_{j,l,s=1}^2\dfrac{\partial^3H^i_\e}{\partial y_j\partial y_l\partial y_s}(0,0)\, u_j v_l w_s,
$$
 respectively. 
 
Accordingly, under hypotheses {\bf H1}, {\bf H2}, and {\bf H3}, the $k$-jet of the first Lyapunov coefficient of the map \eqref{sish2} at $(\by,\sigma)=(0,0)$ can be obtained by expanding $\ell_1^{\e}$ around $\e=0$:
\begin{equation}\label{Tel1}
\ell_1^{\e}=\e^m\ell_{1,m}+\e^{m+1}\ell_{1,m+1}+\e^{m+2}\ell_{1,m+2}+\cdots+\e^{k}\ell_{1,k}+\CO(\e^{k+1}),
\end{equation}
See \cite{ITCanNov2018} for explicit formulae of $\ell_{1,i},$ $m\leq i\leq k.$

The next result was stated in \cite{ITCanNov2018} assuming $d>0$ (see {\bf H2}). Here, we state the version of such a result for $d\neq0.$

\begin{theorem}[\cite{ITCanNov2018}]\label{teo2}
Let $m,$ $1\leq m\leq k,$ be the subindex of the first non-vanishing averaging function and let $\ell_{1,j},$ $m\leq j\leq k,$ as defined in \eqref{Tel1}. In addition to hypotheses {\bf H1}, {\bf H2}, and {\bf H3}, assume that $\ell_{1,j}\neq0$ for some $m\leq j\leq k.$ Let $j^*,$ $m\leq j^*\leq k,$ be the first subindex such that $\ell_{1,j^*}\neq0.$ Then, for each $\e>0$ sufficiently small there exist a $C^1$ curve $\mu(\e)\in J_0,$ with  $\mu(0)=\mu_0,$ and neighborhoods $\U_{\e}\subset \s^1\times \Omega$ of the periodic solution $\Phi(t,\mu(\e),\e)$ and  $J_{\e}\subset J_0$ of $\mu(\e)$ for which the following statements hold.\begin{itemize}

\item[$(i)$] For $\mu\in J_{\e}$ such that $\ell_{1,j^*}(\mu-\mu(\e))d\geq0,$ the periodic orbit $\Phi(t,\mu(\e),\e)$ is unstable (resp. asymptotically stable), provided that $\ell_{1,j^*}>0$ (resp. $\ell_{1,j^*}<0$), and the differential equation \eqref{tm1} does not admit any invariant tori in $\U_{\e}.$

\item[$(ii)$] For $\mu\in J_{\e}$ such that $\ell_{1,j^*}(\mu-\mu(\e))d<0,$ the differential equation \eqref{tm1} admits a unique invariant torus $T_{\mu,\e}$ in $\U_{\e}$ surrounding the periodic orbit $\Phi(t,\mu,\e).$ Moreover,  $T_{\mu,\e}$ is unstable (resp. asymptotically stable), whereas the periodic orbit $\Phi(t,\mu,\e)$ is asymptotically stable (resp. unstable), provided that $\ell_{1,j^*}>0$ (resp. $\ell_{1,j^*}<0$). 

\item[$(iii)$] $T_{\mu,\e}$ is the unique invariant torus of the differential equation \eqref{tm1} bifurcating from the periodic orbit $\Phi(t,\mu(\e),\e)$ in $\U_{\e}$ when $\mu$ passes through $\mu(\e).$
\end{itemize}
\end{theorem} 

The next result provides sufficient conditions for the existence of an invariant torus surrounding the periodic solution $\f(t,\e)$ given by Proposition \ref{p1} (see Figures \ref{fig3a} and \ref{fig3b}). We shall see that the parameter $\gamma_1$ will play the role of $\mu$. Hence, we denote $\f(t,\gamma_1,\e)=\f(t,\e).$  

\begin{proposition}\label{p5} Consider $\ell_1$ as defined in \eqref{CA}
 and assume that the R\"ossler System \eqref{sr} satisfies the hypotheses of Proposition \ref{p1}. If $\ell_1\neq 0$, then there exist a smooth curve $\gamma(\e),$ defined for $\e>0$ sufficiently small and satisfying $\gamma(\e)=\ov\gamma_1+\CO(\e)$ with $\ov\gamma_1=\al_1-\ov a \beta_1$, and intervals $J_{\e}$ containing $\gamma(\e)$ such that a unique invariant torus bifurcates from the periodic solution $\varphi(t,\gamma(\e),\e)$ as $\gamma_1$ passes through $\gamma(\e).$ Such a torus exists whenever $\gamma_1\in J_{\e}$ and $\ell_1(\gamma_1-\gamma(\e))>0,$ and surrounds the periodic solution $\f(t,\gamma_1,\e).$ In addition,  if $\ell_1>0$ (resp. $\ell_1<0$) such a torus is unstable (resp. asymptotically stable), whereas the periodic solution $\varphi(t,\gamma_1,\e)$ is asymptotically stable (resp. unstable).
\end{proposition}

\begin{proof} 
Consider the periodic differential system \eqref{CAs} and its first averaged function $\bg_1(\bz,\gamma_1)$ as given in \eqref{avfCA}. Taking $\bz=(r,z)$, we compute the second averaged function  $\bg_2(\bz,\gamma_1)=\left(\bg_2^1(\bz,\gamma_1),\bg_2^2(\bz,\gamma_1) \right)$ where
\begin{align*}
\bg_2^1(\bz,\gamma_1)&=\Big(-24 \pi  \ov{a}^2 \left(\ov{a}^2-2\right)^2 \left(\ov{a}^2+4\right) z^3-24 \pi  \ov{a} \left(\ov{a}^2-2\right)^2 z^2 \left(\alpha_1 \left(\ov{a}^2+1\right)^2\right.\\
&\left.+\ov{a} \left(\beta_1 \left(\ov{a}^2+7\right)-2 \ov{a} \gamma_1\right)-5 \gamma_1\right)-24 \pi  \left(\ov{a}^2-2\right)^2 z \left(\alpha_1 \left(\ov{a}^2-1\right)+\beta_1 \ov{a}-\gamma_1\right)\\
& \cdot\left(\alpha_1+\alpha_1 \ov{a}^2-\left(\ov{a}^2+1\right) \gamma_1+3 \beta_1 \ov{a}\right)+3 \pi  \ov{a} \left(-4 \ov{a}^6+8 \ov{a}^4+\ov{a}^2-2\right) r^3+\sqrt{2-\ov{a}^2} \\
&\cdot\left(r \left(-12 \pi ^2 \left(\ov{a}^2-2\right) \left(\alpha_1 \left(\ov{a}^2-1\right)+\ov{a} (\beta_1-\ov{a} \gamma_1)+\gamma_1\right)^2-12 \pi ^2 \left(\ov{a}^2-2\right)^2 \ov{a}^4 z^2\right.\right.
\\
&\left.\left.-24 \pi ^2 \left(\ov{a}^2-2\right)^2 \ov{a}^3 z (\alpha_1-\gamma_1)\right)-12 \pi ^2 \ov{a}^4 r^3\right)+r^2 \left(-16 \pi  \left(\ov{a}^2-2\right) \left(2 \ov{a}^4-3\right) \right.
\\
&\left.\cdot\ov{a}^2 z-8 \pi  \left(\ov{a}^2-2\right) \ov{a} \left(-4 \ov{a}^4 \gamma_1+4 \beta_1 \ov{a}^3+4 \ov{a}^2 \gamma_1+\alpha_1 \left(\ov{a}^4-\ov{a}^2+3\right)-3 \beta_1 \ov{a}+3 \gamma_1\right)\right)
\\
&+r \left(12 \pi  \ov{a} \left(\ov{a}^4+8 \ov{a}^2+1\right) \left(\ov{a}^2-2\right)^2 z^2+12 \pi  \left(\ov{a}^2-2\right)^2 \left(4 \alpha_2-\beta_1 (3 \alpha_1+\gamma_1)\right.\right.
\\
&+2 \ov{a}^4 (\alpha_2-\gamma_2)+\ov{a}^3 \left(-\alpha_1^2+2 \beta_2+\gamma_1^2\right)+\ov{a}^2 (-6 \alpha_2-4 \beta_1 \gamma_1+6 \gamma_2)+\ov{a} \left(3 \beta_1^2\right.
\\
&\left.\left.-\alpha_1^2-4 \beta_2+\gamma_1^2\right)-4 \gamma_2\right)-12 \pi  \left(\ov{a}^2-2\right)^2 z \left(2 \alpha_1-7 \beta_1 \ov{a}^3+3 \alpha_1 \ov{a}^2+\left(2 \ov{a}^4\right.\right.
\\
&+\left.\left.\left. 9 \ov{a}^2-2\right) \gamma_1-4 \beta_1 \ov{a}\right)\right)\Big)\dfrac{1}{24 \sqrt{2-\ov{a}^2} \left(\ov{a}^2-2\right)^4},
\\
\bg_2^2(\bz,\gamma_1)&=\Big(10 \pi  \ov{a}^4 r^3+\sqrt{2-\ov{a}^2} \left(r^2 \left(-6 \pi ^2 \left(\ov{a}^2-2\right) \ov{a} (\alpha_1-\gamma_1)-6 \pi ^2 \left(\ov{a}^2-2\right) \ov{a}^2 z\right)\right.
\\
&-24 \pi ^2 \ov{a}^2 \left(\ov{a}^2-2\right) z^3-36 \pi ^2 \ov{a} \left(\ov{a}^2-2\right) z^2 (\alpha_1+\beta_1 \ov{a}-\gamma_1)-12 \pi ^2 \left(\ov{a}^2-2\right) 
\\
&\left. \cdot z (\alpha_1+\beta_1 \ov{a}-\gamma_1)^2\right)+6 \pi  \left(\ov{a}^2-2\right)^2 z^2 \left(\alpha_1-3 \beta_1 \left(\ov{a}^2+2\right) \ov{a}+6 \ov{a}^2 \gamma_1+3 \gamma_1\right)
\\
&-18 \pi  \left(\ov{a}^6-3 \ov{a}^4+4\right) \ov{a} z^3-6 \pi  \left(\ov{a}^2-2\right)^2 z \left(2 \beta_2 \ov{a}^3-\beta_1 (\alpha_1+3 \gamma_1)-4 \alpha_2\right.
\\
&\left.+\ov{a}^2 (-\beta_1 (\alpha_1+3 \gamma_1)+2 \alpha_2-2 \gamma_2)+\ov{a} \left(-3 \alpha_1^2+3 \beta_1^2-4 \beta_2+3 \gamma_1^2\right)+4 \gamma_2\right)
\\
&+r \left(6 \pi  \left(\ov{a}^2-2\right) \left(\alpha_1+\alpha_1 \ov{a}^2-\left(\ov{a}^2+1\right) \gamma_1+3 \beta_1 \ov{a}\right) (\alpha_1+\ov{a} (\beta_1-\ov{a} \gamma_1)+\gamma_1)\right.
\\
&\left.+6 \pi  \ov{a}^2 \left(\ov{a}^6-6 \ov{a}^2+4\right) z^2+6 \pi  \ov{a} \left(\ov{a}^2-2\right) z \left(2 \ov{a} \left(\ov{a}^2+1\right) (\beta_1-\ov{a} \gamma_1) \right.\right.
\\
&\left.\left.+\alpha_1 \left(\ov{a}^4-2 \ov{a}^2+6\right)+6 \gamma_1\right)\right)+r^2 \left(3 \pi  \ov{a} \left(2 \ov{a}^6+\ov{a}^4-7 \ov{a}^2-6\right) z+3 \pi  \left(\ov{a}^2-2\right) \right.
\\
&\left. \cdot\left(4 \beta_1 \ov{a}^3+\alpha_1 \left(2 \ov{a}^4-5 \ov{a}^2-1\right)-\left(2 \ov{a}^4+\ov{a}^2+3\right) \gamma_1\right)\right)\Big)\dfrac{1}{6 \sqrt{2-\ov{a}^2} \left(\ov{a}^2-2\right)^4}.
\end{align*}

Assume that  $0<\ov{a}<\sqrt{2}$, $\mu=\gamma_1,$ $\mu_0=\ov\gamma_1=\al_1-\ov a \beta_1$,  and denoting the zero of $\bg_1$ by $\bx_{\gamma_1}=(\ov{r}_+,\ov{z}),$ we shall check  that the Poincar\'e map
$$
\Pi(\bz,\mu,\e)=\bz+\e\bg_1(\bz,\mu)+\e^2\bg_2(\bz,\mu) +\CO(\e^{3}),
$$
satisfies hypotheses {\bf H1}, {\bf H2}  and {\bf H3}. We point out that the following analysis would be the same assuming $-\sqrt{2}<\ov{a}<0$ and taking $\bz_{\gamma_1}=(\ov{r}_-,\ov{z}).$

In order to check hypothesis {\bf H1}, let
\[
 \omega_0=\dfrac{|\beta_1|  \ov{a}^2}{\left(2-\ov{a}^2\right)^{3/2}}.
 \]
We compute the characteristic polynomial of the Jacobian matrix $\dfrac{\partial\bg_1}{\partial\bz}(\bz_{\gamma_1},\gamma_1),$ obtaining
\[
\begin{split}
p(\lambda) =&
\lambda^2+\dfrac{d_1}{{\ov a}^2 \sqrt{2-{\ov a}^2}}  \lambda + \dfrac{d_0}{{\ov a}^2 \left(2-{\ov a}^2\right)^3},
\end{split}
\]
where $d_0$ and $d_1$ are defined in \eqref{CA}.  Denoting the roots of $p(\lambda)$ by $\lambda(\gamma_1)=\al(\gamma_1)\pm i\beta(\gamma_1)$, it is  straightforward to see that $\al(\ov\gamma_1)=0,$ $\beta(\ov\gamma_1)=\omega_0,$ and
$$
d=\dfrac{d\al(\ov\gamma_1)}{d \gamma_1}=\dfrac{-1}{2 \ov{a}^2 \sqrt{2-\ov{a}^2}}< 0,$$
which verifies hypotheses {\bf H1} and {\bf H2}.  

From hypotheses {\bf H1} and {\bf H2} and using the Implicit Function Theorem, we obtain the functions
\begin{equation*}
\bxi(\gamma_1,\e)=\bz_{\gamma_1}+\CO(\e)\quad \mbox{and} \quad \mu(\e)=\ov\gamma_1+\CO(\e)
\end{equation*}
satisfying \eqref{fixedP} and \eqref{transH}, respectively. Let $\bz=\bx+\bxi(\mu,\e)$, $\sigma=\gamma_1+\mu(\e)$, and  ${\bf A}=(a_{ij})\in\R^{2\times2}$ the matrix given by
\begin{align*}
a_{11}&=\e \left(\frac{\al_1 \left(\ov{a}^2+2\right)\omega_0}{\ov{a} \sqrt{4-2 \ov{a}^2}}+\frac{\left(4 \ov{a}^4-1\right) \sqrt{2-\ov{a}^2}\omega_0^2}{8 \pi  \ov{a}^3}-\frac{\omega_0^2}{2}\right),\\
a_{12}&=\sqrt{2-\ov{a}^2} \e \left(\frac{5 \ov{a}^2\omega_0^2}{6 \pi }-\frac{\al_1\omega_0+\ov{a} \omega_1}{\sqrt{2}}\right)-\frac{\ov{a} \sqrt{2-\ov{a}^2}\omega_0}{\sqrt{2}},\\
a_{21}&=\frac{\sqrt{2-\ov{a}^2} \e  \left(3 \sqrt{2} \pi  (\al_1\omega_0-\ov{a} \omega_1)-5 \ov{a}^2\omega_0^2\right)}{3 \pi  \ov{a}^2 \left(\ov{a}^2-2\right)}+\frac{\sqrt{2} \sqrt{2-\ov{a}^2}\omega_0}{2 \ov{a}-\ov{a}^3},\\
a_{22}&=\e  \left(\frac{1}{2}\omega_0 \left(-\frac{\al_1 \left(\ov{a}^2+2\right)}{\ov{a} \sqrt{1-\frac{\ov{a}^2}{2}}}-\omega_0\right)+\frac{\left(1-4 \ov{a}^4\right) \sqrt{2-\ov{a}^2}\omega_0^2}{8 \pi  \ov{a}^3}\right),\\
\omega_1&=\sqrt{2-\ov{a}^2} \left(\dfrac{\left(5-4 \ov{a}^2\right) \omega_0^2}{16 \pi  \ov{a}^2}-\dfrac{2 \pi  \beta_2 \ov{a}^2}{\left(\ov{a}^2-2\right)^2}\right)-\frac{\al_1 \left(\ov{a}^2+4\right) \omega_0}{\ov{a} \left(\ov{a}^2-2\right)}.
\end{align*}
 Taking the linear change of variables $\bx={\bf A}.\by$, we get the map
\begin{align*}
\by&\mapsto H_\e(\by,\sigma)={\bf A}^{-1}\Pi\left({\bf A}\by+\bxi\left(\sigma+\mu(\e),\e\right),\sigma+\mu(\e),\e\right),
\end{align*}
as defined in  \eqref{sish2}. Expanding the Jacobian matrix $D_{\by} H_{\e}(0,0)$ around $\e=0$
\begin{equation*}
D_{\by} H_{\e}(0,0)=\begin{pmatrix}
1&0\\
0&1
\end{pmatrix}+\e \begin{pmatrix}
0&-\omega_0\\
\omega_0&0
\end{pmatrix}+\e^2\begin{pmatrix}
-\dfrac{\omega_0}{2}&-\omega_1\\
\omega_1&-\dfrac{\omega_0}{2}
\end{pmatrix}+\CO(\e^3),
\end{equation*}
we see that it verifies hypothesis {\bf H3}. In order to obtain the Lyapunov Coefficient \eqref{ell}, we compute the multi-linear functions defined in \eqref{multF}:
\begin{align*}
 B_\e(\bu,\bv)=&\e\left( \frac{\pi  \ov{a} \sqrt{2-\ov{a}^2} \left(\ov{a}^2 u_2 v_2+4 u_1 v_1\right)}{\left(\ov{a}^2-2\right)^2},-\frac{\pi  \ov{a}^3 \sqrt{2-\ov{a}^2} (u_1 v_2+u_2 v_1)}{\left(\ov{a}^2-2\right)^2}\right)+\CO(\e^2),\\
 C_\e(\bu,\bv,\bw)=&\Bigg(\dfrac{-\e^2\pi  \ov{a}}{3 \sqrt{2-\ov{a}^2} \left(\ov{a}^2-2\right)^3} \Big(\left(\ov{a}^2-2\right) \left(\ov{a}^2+1\right) \Big(\left(2 \ov{a}^2+3\right) \ov{a}^2 u_1 v_2 w_2
 \\
&\left.+\left(2 \ov{a}^2+3\right) \ov{a}^2 u_2 (v_1 w_2+v_2 w_1)+36 u_1 v_1 w_1\right)+\ov{a} \sqrt{2-\ov{a}^2} 
\\
&\cdot\left(-2 \pi  \left(\ov{a}^2-2\right) \ov{a}^2 w_2 (u_1 v_2+u_2 v_1)-2 \pi  \left(\ov{a}^2-2\right) \ov{a}^2 u_2 v_2 w_1+\sqrt{2} \ov{a}\right.
\\
&\cdot\left(2 \left(\ov{a}^4+2 \ov{a}^2-2\right) u_1 (v_1 w_2+v_2 w_1)-5 \ov{a}^4 u_2 v_2 w_2+2 \left(\ov{a}^4+2 \ov{a}^2-2\right) u_2 v_1 w_1\right)
\\
& +48 \pi  u_1 v_1 w_1\Big)\Big),\dfrac{\e^2\pi  \ov{a} }{36 ( ba^2-2)^3}\Big(-3 \sqrt{2-\ov{a}^2} \big(8 \left(\ov{a}^4+8 \ov{a}^2+1\right) u_1 (v_1 w_2+v_2 w_1)
\\
&+8 \left(\ov{a}^4+8 \ov{a}^2+1\right) u_2 v_1 w_1+3 \left(4 \ov{a}^4-1\right) \ov{a}^2 u_2 v_2 w_2\big)-4 \Big(4 \sqrt{2} u_1 \big(9 \left(\ov{a}^2+4\right) v_1 w_1
\\
&+\left(3-2 \ov{a}^4\right) \ov{a}^2 v_2 w_2\big)+6 \pi  \left(\ov{a}^2-2\right) \ov{a}^3 u_1 (v_1 w_2+v_2 w_1)+\ov{a}^2 u_2 \Big(4 \sqrt{2} \left(3-2 \ov{a}^4\right)
\\
&\cdot v_1 w_2+4 \sqrt{2} \left(3-2 \ov{a}^4\right) v_2 w_1-9 \pi  \ov{a}^3 v_2 w_2+6 \pi  \left(\ov{a}^2-2\right) \ov{a} v_1 w_1\Big)\Big)\Big) \Bigg)+\CO(\e^3).
\end{align*}

Finally, taking \eqref{Tel1} into account, we compute 
\[
\ell_{1,1}=0 \quad \text{and}\quad \ell_{1,2}=\dfrac{\pi}{192\,\ov a^2 (2-\ov a^2)^3}{\ell_1},
\]
where $\ell_{1}$ is defined in \eqref{CA}. Hence,
\begin{align*}
\ell^\e_{1}=\e^2 \ell_{1,2}+\CO(\e^3),
\end{align*}
and the proof of Theorem \ref{t1} follows by applying Theorem \ref{teo2} with $m=1$ and $j^*=k=2$.
\end{proof}

\section{Numerical Examples}\label{sec:ex}

In this section, we provide three numerical examples for which Theorems \ref{t1} and \ref{t2} apply. In Examples 1 and 2, Theorem \ref{t1} predicts the existence of an unstable invariant torus and an asymptotically stable invariant torus, respectively. In Example 3, Theorem \ref{t2} predicts the existence of an asymptotically stable periodic solution.

\subsection{Example 1} Assume the following values for the coefficients of the R\"{o}ssler System \eqref{sr} in {\bf Case A}:
\begin{equation}\label{example1}
\ov a=\dfrac{1}{2}, \,\, \al_1=2,\,\, \beta_1=2,\,\, \al_2=-2, \,\, \beta_2=-1\,\, \text{ and }\,\, \gamma_2=-1.
\end{equation}
Thus, we compute $\ov\gamma_1=1\,\, \text{ and }\,\, \ell_{1}\simeq 155.66>0.$
Taking $\gamma_1=\ov\gamma_1+10^{-3},$
we have  $\ell_1(\gamma_1-\gamma(\e))>0$ for $\e>0$ small.  Theorem \ref{t1} predicts, for $\e>0$ small enough, the existence of an asymptotically stable periodic solution of \eqref{sr} surrounded by an unstable invariant torus (see Figure \ref{fig3a}).

\begin{figure}[H]
	\begin{overpic}[width=8cm]{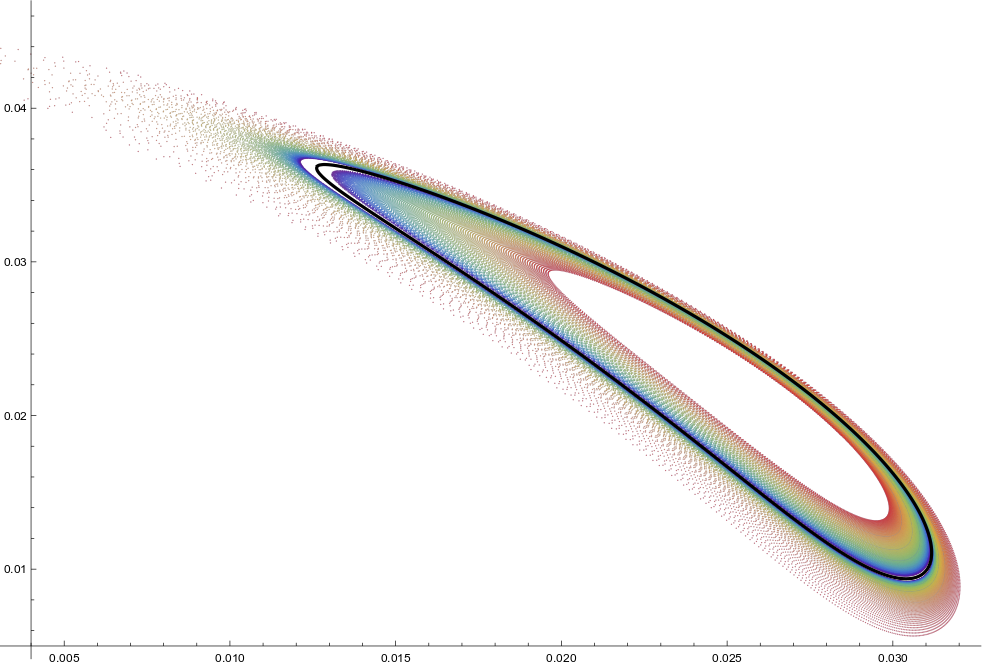}
	\end{overpic}
	\caption{Poincar\'{e} section $\{z=0\,\, \text{and}\,\, x>0\}$ of the R\"ossler System \eqref{sr}, assuming \eqref{example1} and $\e=10^{-2}.$ Trajectories starting at $(2982\times10^{-5},9656\times10^{-6},0)$ and $(3020\times10^{-5},9260\times10^{-6},0)$. 
The unstable invariant torus corresponds to an unstable invariant closed curve of the Poincar\'{e} map.}
	\label{fig3a}	
\end{figure}

\subsection{Example 2} Assume the following values for the coefficients of the R\"{o}ssler System \eqref{sr} in {\bf Case A}:
\begin{equation}\label{example2}
\ov a=-\dfrac{39}{32}, \,\, \al_1=1,\,\, \beta_1=2,\,\, \al_2=-2, \,\, \beta_2=-1,\,\, \text{ and }\,\, \gamma_2=-1.
\end{equation}
Thus, we compute $\ov\gamma_1=\dfrac{55}{16}$ and $\ell_{1}\simeq -1122.13<0.$ Taking  $\gamma_1=\ov\gamma_1-\dfrac{1}{250},$ we have  $\ell_1(\gamma_1-\gamma(\e))>0$ for $\e>0$ small. Theorem \ref{t1} predicts, for $\e>0$ small enough, the existence of an unstable  periodic solution of \eqref{sr} surrounded by an asymptotically stable invariant torus (see Figure \ref{fig3b}).

\begin{figure}[H]
	\begin{overpic}[width=9cm]{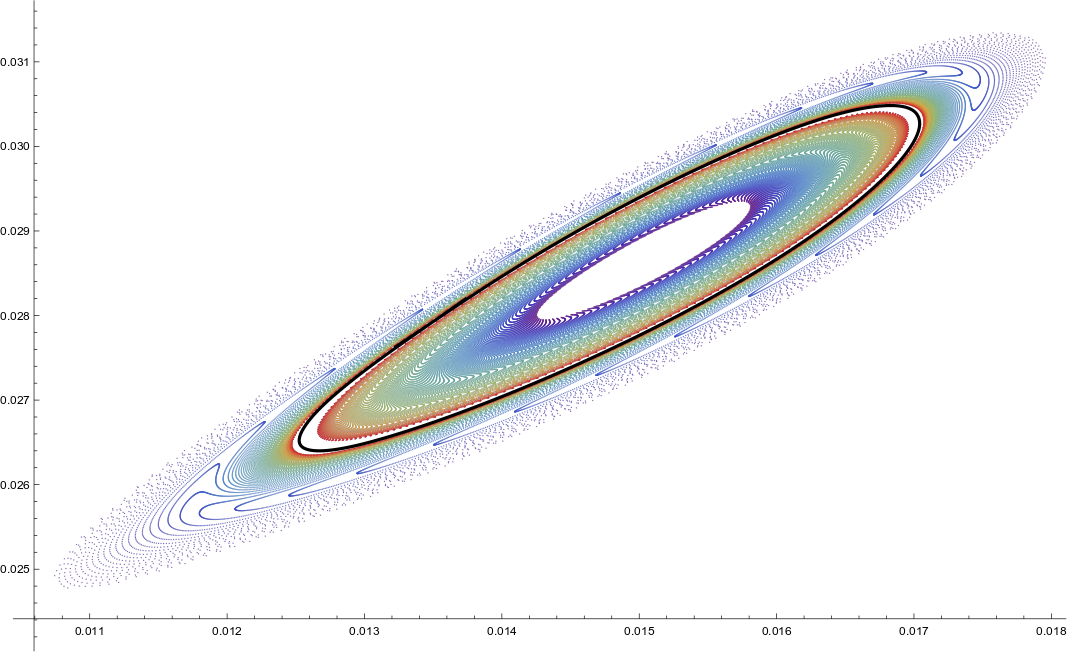}
	\end{overpic}
	\caption{
	Poincar\'{e} section $\{z=0\,\, \text{and}\,\, x>0\}$ of the R\"ossler System \eqref{sr}, assuming \eqref{example2} and $\e=10^{-2}.$ Trajectories starting at $(1504\times10^{-5},2852\times10^{-5},0)$ and $(1523\times10^{-5},2695\times10^{-5},0)$. 
The asymptotically stable invariant torus corresponds to an asymptotically stable invariant closed curve of the Poincar\'{e} map.}
	\label{fig3b}	
\end{figure}

\subsection{Example 3} Assume the following values for the coefficients of the R\"{o}ssler System \eqref{sr} in {\bf Case B}:
\begin{equation}\label{example3}
\begin{array}{l}
 \alpha_3=55,\,\, \alpha_4=\dfrac{37}{40},\,\, \alpha_5=\dfrac{57}{5},\vspace{0.2cm}\\
\beta_1=-1,\,\, \beta_2=-1,\,\, \beta_3=-\dfrac{177}{10},\,\, \beta_4=-1,\,\, \beta_5=18,\vspace{0.2cm}\\
\gamma_1=1,\,\, \gamma_2=-1,\,\, \gamma_3=0,\,\, \gamma_4=\dfrac{193}{10},\,\, \gamma_5=-\dfrac{247}{10},\,\, \text{and}\vspace{0.2cm}\\
\omega=\dfrac{39}{32}.
\end{array}
\end{equation}
Thus, we compute 
\[
 \alpha_1=\dfrac{497}{1024},\, \alpha_2=-\dfrac{1521}{1024},\,\lambda_1=-\dfrac{123}{239},\,\, \lambda_2=-\dfrac{116}{239},\,\, \text{ and }\,\, \delta=\dfrac{30963}{272}.
\]
Theorem \ref{t2} predicts, for $\e>0$ small enough, the existence of an asymptotically stable periodic solution of \eqref{sr}.

\begin{figure}[H]
	\subfigure[]
	{\begin{overpic}[width=5.5cm]{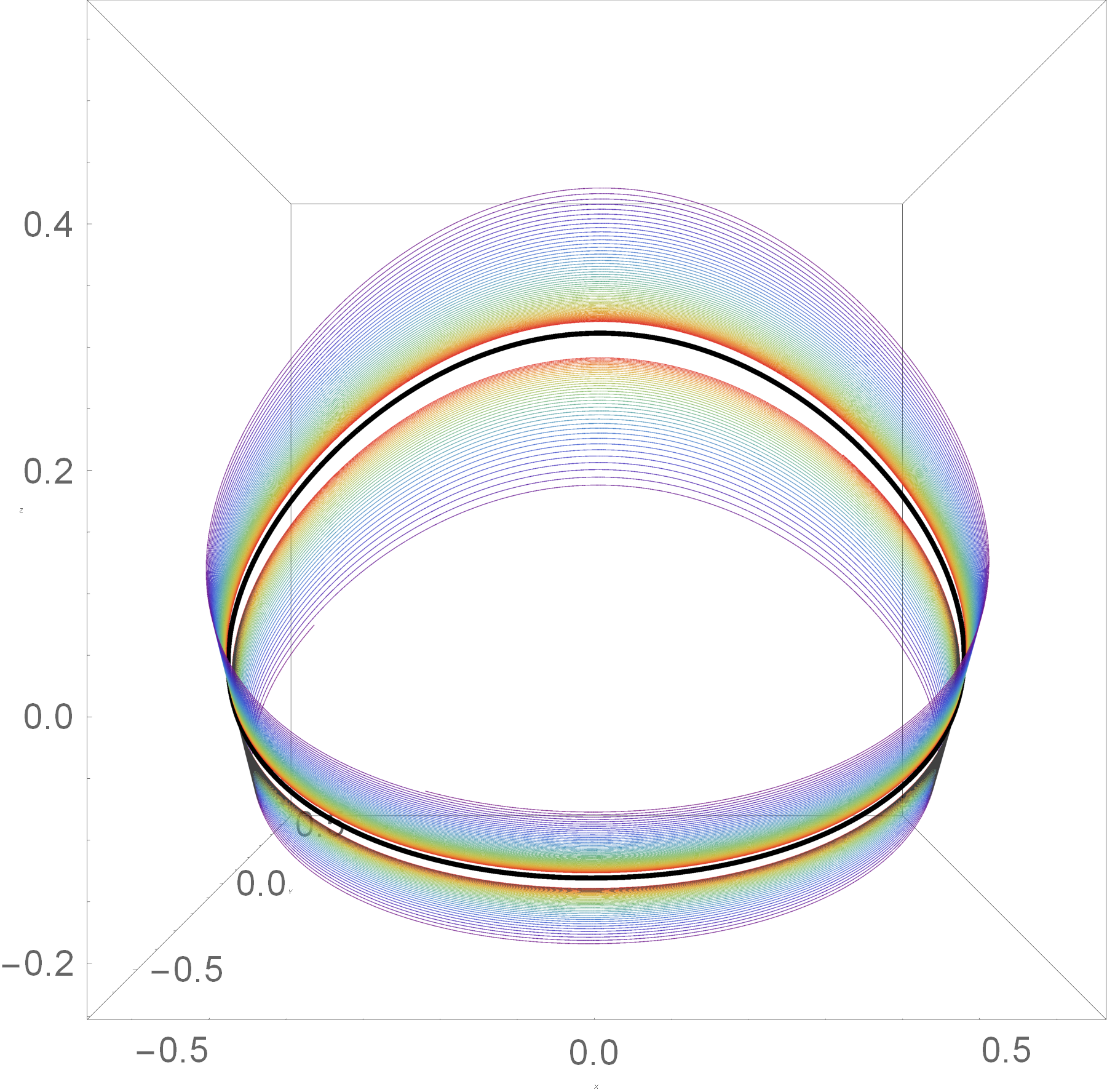}
	\end{overpic}}
		\subfigure[]
	{\begin{overpic}[width=6cm]{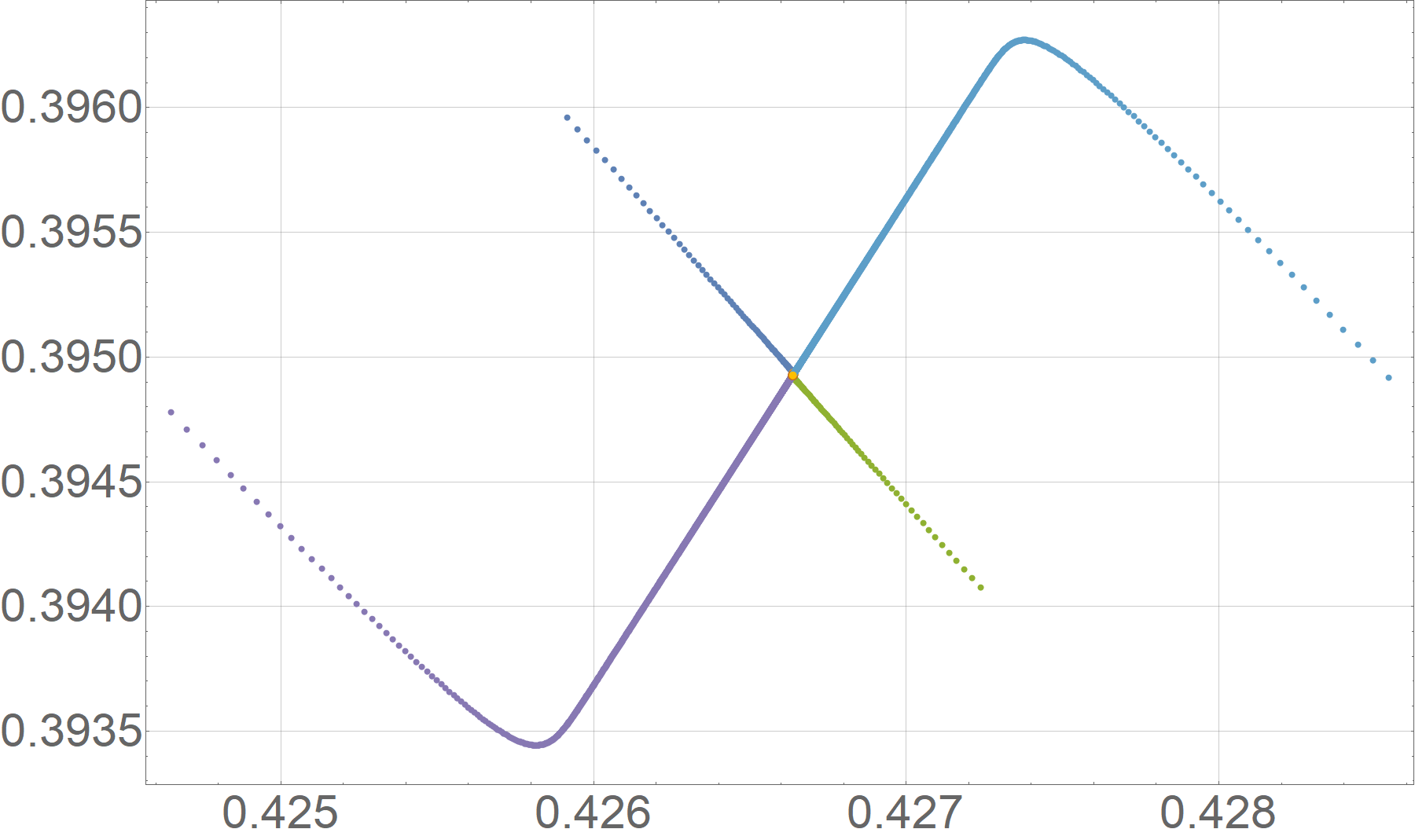}
	\end{overpic}}
	\caption{
	R\"ossler System \eqref{sr} assuming \eqref{example3} and $\e=1/50$.
Figure \ref{fig1}$(a)$ depicts two solutions being attracted by the periodic solution, represented by the closed curve. Figure \ref{fig1}$(b)$ depicts the intersection of four distinct solutions starting at $p_1=(0, 425\times 10^{-3}, 39725\times 10^{-5})$, $p_2=(0, 428\times 10^{-3}, 393\times 10^{-3})$, $p_3=\left(0, 4471/10530, 751/1902\right)$ and $p_4=(0, 4907/11449, 751/1902)$, with the Poincar\'{e} section $\{x=0 \text{ and } y>0\}$. The asymptotically stable periodic solution in Figure \ref{fig1}$(a)$ corresponds to an asymptotically stable fixed point in Figure \ref{fig1}$(b)$.}\label{fig1}
	\end{figure}

\section{Discussion}\label{sec:dis}

The R\"ossler System \eqref{sr}  is characterized by a three-parameter family of quadratic 3D vector fields and was introduced as a prototype of a simple autonomous differential system behaving chaotically for some values of the parameters. Studying the bifurcations occurring in the R\"ossler System has been a subject of interest for many authors. In our study, we were concerned about bifurcations of periodic solutions and invariant tori from zero-Hopf equilibria of the R\"ossler System. There exist two one-parameter families of R\"ossler Systems exhibiting a zero-Hopf equilibrium. Namely: {\bf Case  A} when $(a,b,c)=(\ov a, 1,\ov a)$, with $\ov a\in (-\sqrt{2},\sqrt{2})\setminus\{0\};$ and {\bf Case  B} when $(a,b,c)=(0, \ov b,0)$, with $\ov b\in (-1,\infty).$

For R\"ossler Systems near to the family of {\bf Case  A}, our main contribution (Theorem \ref{t1}) consisted in providing generic conditions ensuring the existence of a torus bifurcation. In this case, the torus surrounds a periodic solution that bifurcates from the zero-Hopf equilibrium.  Our analysis was based in a recent result \cite{ITCanNov2018} for detecting torus bifurcation through averaging theory. This kind of bifurcation had been previously indicated for the  R\"ossler System. Nevertheless, to the best of our knowledge, this is the first time that analytic generic conditions were provided ensuring  the existence of an invariant torus bifurcating from a zero-Hopf equilibrium in the R\"ossler System. The strategy followed by \cite{ITCanNov2018} consisted in looking for conditions on the averaged functions that ensure a {\it Neimark-Sacker Bifurcation} in the {\it Poincar\'{e} map}. This has been proven to be an effective method to detect torus bifurcation in 3D vector fields having zero-Hopf equilibria.

For R\"ossler Systems near to the family of {\bf Case  B}, the first-order averaging method had already been proven to not be able to detect any periodic solution bifurcating from the zero-Hopf equilibrium. Here, we showed that up to order five the usual recursive higher order averaging method does not provide any information about the existence of periodic solutions as well. This essentially means that the averaged functions, associated with the R\"{o}ssler System, do not have simple zeros. Thus, based on a recent result on averaging theory \cite{CLN}, our main contribution (Theorem \ref{t2}) consisted in providing generic conditions for the existence of a periodic solution bifurcating from the zero-Hopf equilibrium. This improved currently known results for such a family. The analysis performed in \cite{CLN} uses Lyapunov-Schmidt reduction to study the existence of periodic solutions bifurcating from non-isolated zeros of the first-order averaged function. Basically, this allowed us to use the second- and third-order averaged functions to perturb such a set of non-isolated zeros for obtaining sufficient conditions for the existence of a periodic solution. Theorem \ref{t2}  emphasizes the importance of the method developed in \cite{CLN}, which can improve the analysis of other systems through averaging theory.

In addition, the stability properties of such periodic solutions and invariant torus were analyzed. We showed that the periodic solution provided in {\bf Case A} can have its stability easily determined by the Jacobian matrix of the first-order averaged function, which in this case is hyperbolic. However, determining the stability of the periodic solution provided in {\bf Case B} required a more refined analysis, because in this case the Jacobian matrix of the first-order averaged function is not hyperbolic.
Accordingly, the theory of $k$-determined hyperbolic matrices \cite{Mu} was employed in order to use the forth- and fifth- averaged functions to study the stability of such a periodic solution. This procedure can be used to study the stability of periodic solutions obtained through averaging theory, however a general higher order approach is up to be developed.

\section*{Acknowledgements}

We thank the referees for their comments and suggestions which helped us to improve the presentation of this paper.

\smallskip

The authors thank Espa\c{c}o da Escrita - Pr\'{o}-Reitoria de Pesquisa - UNICAMP for the language services provided.

\smallskip

MRC is partially supported by FAPESP grants 2018/07344-0 and 2019/05657-4. DDN is partially supported by FAPESP grants 2018/16430-8, 2018/ 13481-0, and 2019/10269-3, and by CNPq grants 306649/2018-7 and 438975/ 2018-9. CV is partially supported by FCT/Portugal through UID/MAT/ 04459/2013.

\bibliography{CanNovVal2019}{}
\bibliographystyle{abbrv}

\end{document}